\newtheorem{theorem}{Theorem}[section]
\newtheorem{lemma}[theorem]{Lemma}
\newtheorem{definition}[theorem]{Definition}
\title{An explicit solution for optimal investment problems with autoregressive prices and
exponential utility\thanks{S. D. is with {E\"otv\"os Lor\'and University, Budapest}, M. R. is with
MTA Alfr\'ed R\'enyi Institute of Mathematics, Budapest. Part of this research was carried out while the second author was
also affiliated with the University of Edinburgh.}}
\author{S\'andor De\'ak \and 
Mikl\'os R\'asonyi}
\date{\today}
\begin{document}

\maketitle

\begin{abstract} 
We calculate explicitly the optimal strategy for an investor with exponential utility function when the stock price follows an autoregressive
Gaussian process. We also calculate its performance and analyse it when the trading horizon tends to infinity. Dependence of
asymptotic performance on the autoregression parameter is determined. This provides, to the best of our knowledge, the first instance
of a theorem linking directly the memory of the asset price process to the attainable satisfaction level of investors trading
in the given asset.
\end{abstract}

\textbf{MSC (2010):} Primary 93E20; Secondary
91G10.

\smallskip

\textbf{Keywords:} expected utility maximization; Gaussian autoregressive process; memory of a stochastic process

\smallskip

\section{Introduction}

Sequences of independent random variables have no memory at all, Markovian processes remember their past 
through their present value only.
In the case of processes with longer memory the entire past may influence the current evolution of the given
stochastic system, e.g. in the case of fractional Brownian motion and related processes.

Econometric time series exhibit various degrees of influence of the past on the present, 
depending on the sampling frequency. 
High-frequency
volatility has long-range dependence while asset prices may or may not have this property, \cite{baillie}.
The principal motivating question of our research is the following: how does the memory of an asset's price
influence the satisfaction attainable from investing into this asset ? 

The present paper concentrates on a Markovian setting. It 
precisely characterizes the dependence of performance on memory in a concrete model class where
the price follows a Gaussian autoregressive processes. In the case of investors with exponential utility we
find the optimal trading strategy for each finite time horizon and analyse what happens when the horizon tends
to infinity. We determine the exact dependence of the asymptotic performance on the autoregression parameter and
hence make the first step towards general results linking investment performance to the memory length 
of the underlying
security price.

The present paper continues previous investigations of \cite{dokuchaev,fs,martin}, where asymptotic
arbitrage in the utility sense was considered, i.e. the speed of the expected utility growth
when the time horizon tends to infinity. The first two references concentrated on
continuous-time models, \cite{martin} treated a model where borrowing and short-selling were
forbidden and utility functions were defined on the positive axis only.

The possibly negative prices of the model we consider may be acceptable in certain contexts (e.g. futures trading).
Its parameters may also be tuned such that negative prices practically never occur.
We nonetheless stress that our purpose is to exhibit a theoretical model whose qualitative conclusions are 
hoped to extend 
to a broader class of processes in the future so we are not bothered by the eventual negativity of prices.

We stress that it occurs very rarely that optimal strategies can be determined in closed form
for discrete-time investment problems. As far as we know our paper is the first to have found
the explicit solution for the case of autoregressive Gaussian processes.

In the present section we explain our model and the optimisation problem in consideration. In Section 2 we 
present our 
results,  Section 3 contains the proofs.

We are working with a financial market in which two assets are traded: a riskless asset with price constant $1$,
and a single risky asset whose price $X_t$ is an $\mathbb{R}$-valued stochastic process governed by 
the equation
\begin{equation}\label{80}
X_{t+1}=\alpha X_t +\sigma \varepsilon_{t+1},\ t\geq 0,
\end{equation}
where $\alpha\in\mathbb{R}$, $\sigma>0$ are parameters and $\varepsilon_t$ are i.i.d. 
standard Gaussian, independent of $X_0$. Introducing $\beta:=\alpha-1$, we may rewrite
\eqref{80} as
\begin{equation}
X_{t+1}-X_t=\beta X_t +\sigma\varepsilon_{t+1},\ t\geq 0.
\end{equation}
The information flow is given by
\begin{equation}
\mathcal{F}_t :=\sigma(X_s, 0\leq s \leq t).
\end{equation}
We interpret $\alpha$ (or, equivalently, $\beta$) as a ``memory parameter'' indicating how
previous values of the process $X$influence its present value. Eventually, our purpose is to
find the dependence of maximal achievable utility on this parameter.

A trading strategy is described by the number of units in the risky asset at $t$, denoted 
by $\phi_t$ for $t\geq 1$. Trading strategies are assumed $(\mathcal{F}_t)_{t\geq 0}$-predictable 
$\mathbb{R}$-valued 
processes (i.e. $\phi_t$ is $\mathcal{F}_{t-1}$-measurable for all $t$), in particular, short-selling is
allowed. The totality of trading strategies is denoted by $\Phi$.

The wealth process corresponding to a given trading strategy $(\phi_t)_{t\geq 1}$ is 
\begin{equation}\label{logwealth}
L_t^{\phi}:=L_{t-1}^{\phi}+\phi_t(X_t-X_{t-1}), \qquad t\geq 1,
\end{equation}
where $L_0^{\phi}:=L_0$ is the initial capital of the investor.
In other words, the terminal wealth of the investor is given by 
\begin{equation}\label{logwealth2}
L_T^{\phi}=L_0+\sum_{j=1}^T\phi_j(X_j-X_{j-1}),
\end{equation}
where $T\geq 1$ is a time horizon.

We focus on a finite horizon utility maximization problem and look for the optimal strategy $({\phi}_t^*)_{1\leq t\leq T}$ 
which satisfies
\begin{equation}\label{90}
\sup_{\phi\in\Phi}\mathbb{E}U\left(L_T^{\phi}\right)=\mathbb{E}U\left(L_T^{{\phi}^*}\right),
\end{equation}
where $U:\mathbb{R}\rightarrow\mathbb{R}$ is the utility function $U(x)=-e^{-x}$. Note that 
the expectations exist but may be $-\infty$.
We are going to give an explicit solution for this problem.

In order to make a comparison, we also consider an investor who is not using the accumulated 
past information,
i.e. we define $\Phi_0$ as the set of trading strategies for which $\phi_t$ is $\mathcal{F}_0$-measurable for 
all $1\leq t\leq T$. We wish to find ${\eta}^*\in\Phi_0$ such that
$$
\sup_{\phi\in\Phi_0}\mathbb{E}U\left(L_T^{\phi}\right)=\mathbb{E}U\left(L_T^{{\eta}^*}\right).
$$
We may and will suppose $L_0=0$ in the sequel.

\section{Results}

\begin{theorem}\label{maintheorem}
%Consider the log-wealth process in case where the log-price value is an autoregressive process, namely
%\begin{equation}
%X_t=\alpha X_{t-1}+\sigma\varepsilon_t,
%\end{equation}
%where $\sigma,\alpha, X_0\in\mathbb{R}$, and $\varepsilon_t$ are i.i.d. $N(0,1)$. In this case, using $U(x)=-e^{-x}$ as utility function, 
%The statements below hold.
\begin{enumerate}
	\item  The optimal strategies for time horizon $T\in\mathbb{N}$ are
				$({\phi}^*_t)_{1\leq t\leq T}:=(\hat{\phi}_t^T(X_{t-1}))_{1\leq t\leq T}$ when past information
				is used and
				$({\eta}^*_t)_{1\leq t\leq T}:=(\hat{\phi}_t^T(X_{0}))_{1\leq t\leq T}$ when past 
				information is ignored 
				where
				\begin{equation}\label{strategies}
					\hat{\phi}_t^T(z)=\frac{\beta z}{\sigma^2}\theta_t^T \qquad \textrm{for all }1\leq t\leq T\mbox{ and }z\in\mathbb{R},
				\end{equation}
				and  $\theta_t^T=1-(T-t)\beta$.
	\item Using these strategies, the maximal conditional expected utilities are 
				\begin{equation}\label{condexp}
				\mathbb{E}\left[U\left(L_T^{{\phi}^*}\right)|X_0=z\right]=-\frac{1}{\sqrt{\gamma_\beta(T)}}e^{-\frac{\beta^2z^2}{2\sigma^2}T} 
						\textrm{, and}
				\end{equation}
				and
				\begin{equation}\label{condexpnomem}
					\mathbb{E}\left[U\left(L_T^{{\eta}^*}\right)|X_0=z\right]=-e^{-\frac{\beta^2z^2}{2\sigma^2}T},
				\end{equation}
				respectively, where $\gamma_\beta$ is given by%continuous in $\beta$,
				\begin{equation}\label{gamma}
					\gamma_\beta(T)=\left\{\begin{array}{cl} 
					\frac{\beta^{2T}\Gamma(\frac{1}{\beta^2}+T)}{\Gamma(\frac{1}{\beta^2})} & \textrm{if }\beta\neq 0, \\
					 1 & \textrm{if }\beta=0, \end{array}\right. \\
				\end{equation}
				and $\Gamma$ is the well-known gamma function, $\Gamma(x):=\int_0^{+\infty}t^{x-1}e^{-t}\mathrm{d}t$.
	\item In case of a stable autoregressive process (when $|\alpha|<1$, or, equivalently, $\beta\in (-2,0)$) 
		assuming that $\mathrm{var}(X_t)=1$, and $X_0$ is $N(0,1)$, the maximal expected utility is
				\begin{equation}\label{exp}
					\mathbb{E}\left[U\left(L_T^{{\phi}^*}\right)\right]=-
					\sqrt{\frac{\beta+2}{\left(2-(T-1)\beta\right)\gamma_{\beta}(T)}}\textrm{, and}
				\end{equation}
				\begin{equation}\label{expnomem}
					\mathbb{E}\left[U\left(L_T^{{\eta}^*}\right)\right]=-\sqrt{\frac{\beta+2}{\left(2-(T-1)\beta\right)}}.
				\end{equation}
	\item The asymptotic behaviour of $\gamma_\beta$ is 
				$\lim_{T\rightarrow+\infty}\frac{\gamma_\beta(T)}{h_\beta(T)}=1$
				where
				\begin{equation}\label{asymp}
					h_\beta(T)=\left\{\begin{array}{cl} 
					\Gamma\left(\frac{1}{\beta^2}\right)\left(\beta^2\right)^{1-\frac{1}{\beta^2}}\sqrt{2\pi\left(T-1+\frac{1}{\beta^2}\right)} 
					\left(\frac{1+(T-1)\beta^2}{e}\right)^{T-1+\frac{1}{\beta^2}} & \textrm{if }
					\beta\neq 0, \\ 1 & \textrm{if }\beta=0. \end{array}\right. \\
				\end{equation}
	\end{enumerate}
\end{theorem}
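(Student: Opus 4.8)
The plan is to prove the four parts in order, treating parts~1 and~2 jointly by backward induction, since that is where the real work lies. \emph{Parts 1 and 2.} Using the Markov property of $(X_t)_{t\ge0}$ and the identity $U(a+b)=e^{-a}U(b)$ for $U(x)=-e^{-x}$, it suffices to follow, for $0\le t\le T$, the conditional value function
\[
W_t^T(z):=\sup_{\phi}\mathbb{E}\Big[U\big({\textstyle\sum_{j=t+1}^{T}\phi_j(X_j-X_{j-1})}\big)\,\Big|\,X_t=z\Big],
\]
the supremum running over $\mathbb{R}$-valued predictable $(\phi_j)_{t+1\le j\le T}$. One has $W_T^T\equiv-1$, and the dynamic programming principle gives
\[
W_t^T(z)=\sup_{p\in\mathbb{R}}\mathbb{E}\big[e^{-p(X_{t+1}-z)}\,W_{t+1}^T(X_{t+1})\,\big|\,X_t=z\big].
\]
I would guess, and verify by downward induction on $t$, that $W_t^T(z)=-c_t^T\exp(-d_t^T z^2)$ with $c_t^T>0$, $d_t^T\ge0$. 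Inserting this ansatz and using $X_{t+1}-z=\beta z+\sigma\varepsilon_{t+1}$, $\varepsilon_{t+1}\sim N(0,1)$, the inner expectation is an explicit one-dimensional Gaussian integral; evaluating it (which needs $1+2d_{t+1}^T\sigma^2>0$, guaranteed by $d_{t+1}^T\ge0$) and maximizing the resulting exponential-quadratic in $p$ yields the optimal position $p=\beta z\,\theta_{t+1}^T/\sigma^2$ — which, after an index shift, is precisely $\hat\phi_t^T(X_{t-1})$ — together with coupled scalar recursions for $\theta_t^T$, $d_t^T$, $c_t^T$. Solving them gives $\theta_t^T=1-(T-t)\beta$ and $d_t^T=\beta^2(T-t)/(2\sigma^2)$, while iterating the $c$-recursion produces $c_0^T=\gamma_\beta(T)^{-1/2}$ with $\gamma_\beta$ as in \eqref{gamma}; evaluation at $t=0$ gives \eqref{condexp}. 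A standard verification argument — the value function defined as a supremum over \emph{all} predictable strategies solves this Bellman equation, and the Markovian feedback built from the per-step optimizers attains it — shows $\phi^*$ is optimal in $\Phi$.

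\emph{The information-free problem.} For $\phi\in\Phi_0$ each $\phi_t$ is $\mathcal{F}_0$-measurable, hence a function of $X_0$ alone. Conditionally on $X_0=z$, the terminal wealth $\sum_j a_j(z)(X_j-X_{j-1})$ is Gaussian, with mean and variance explicit functions of $(a_1(z),\dots,a_T(z))$ (affine and quadratic, respectively), so by $\mathbb{E}[-e^{-G}]=-\exp\!\big(-\mathbb{E}G+\tfrac12\mathrm{Var}\,G\big)$ the maximisation collapses to a finite-dimensional concave quadratic program; solving it yields the optimiser $\hat\phi_t^T(X_0)$ with value $-\exp\!\big(-\beta^2z^2T/(2\sigma^2)\big)$, i.e.\ \eqref{condexpnomem}. (Equivalently, rerun the backward induction constraining the control at each step to be a function of $X_0$.)

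\emph{Parts 3 and 4.} For part~3, put $z=X_0$ in \eqref{condexp}--\eqref{condexpnomem} and take expectations. Stationarity with $X_0\sim N(0,1)$ forces $\sigma^2=1-\alpha^2=-\beta(\beta+2)$ (positive for $\beta\in(-2,0)$) and $X_t\sim N(0,1)$ for all $t$, whence $\mathbb{E}[U(L_T^{\phi^*})]=-\gamma_\beta(T)^{-1/2}\mathbb{E}\big[e^{-\frac{\beta^2T}{2\sigma^2}X_0^2}\big]$; the identity $\mathbb{E}[e^{-\lambda Z^2}]=(1+2\lambda)^{-1/2}$ for $Z\sim N(0,1)$, $\lambda>-1/2$, with $\lambda=\beta^2T/(2\sigma^2)$, together with the simplification $1+\beta^2T/\sigma^2=(2-(T-1)\beta)/(\beta+2)$, gives \eqref{exp}; \eqref{expnomem} is the same computation without the $\gamma_\beta(T)$ factor, and $2-(T-1)\beta>0$, $\beta+2>0$ on $(-2,0)$ keep everything well-defined. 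Part~4 is an exercise in Stirling's formula: writing $\Gamma(\tfrac1{\beta^2}+T)=\Gamma\big((T-1+\tfrac1{\beta^2})+1\big)$ and using $\Gamma(m+1)\sim\sqrt{2\pi m}\,(m/e)^m$, then substituting into \eqref{gamma}, factoring $T-1+\tfrac1{\beta^2}=(1+(T-1)\beta^2)/\beta^2$, distributing the exponent and collecting the powers of $\beta^2$ against the prefactor $\beta^{2T}$, one obtains $h_\beta(T)$ of \eqref{asymp} up to a factor $1+o(1)$; the case $\beta=0$ is trivial since $\gamma_0\equiv h_0\equiv1$.

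\emph{Where the difficulty lies.} The substantive step is the induction in parts~1--2: pushing the quadratic-exponential ansatz through the Gaussian integral, extracting and solving the three recursions, and — above all — carrying out the verification argument cleanly, i.e.\ showing that the supremum over all predictable strategies (some of which yield expected utility $-\infty$) is genuinely attained by the Markovian candidate and equals the closed-form value, rather than being merely a stationary point. Once that is established, parts~3 and~4 are routine.
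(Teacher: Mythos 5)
Your proof is correct, but it takes a genuinely different route from the paper's on the substantive part (the informed-investor case in parts 1--2). You run backward dynamic programming in $t$ with the scalar ansatz $W_t^T(z)=-c_t^T e^{-d_t^T z^2}$, so each step is a one-dimensional Gaussian integral followed by a one-dimensional quadratic minimisation in $p$; the resulting recursions do close up (the optimiser is $\tfrac{\beta z}{\sigma^2}\theta_{t+1}^T$, $d_t^T=\tfrac{\beta^2(T-t)}{2\sigma^2}$, and $c_t^T=c_{t+1}^T/\sqrt{1+\beta^2(T-t-1)}$, whence $\gamma_\beta(T)=\prod_{i=0}^{T-1}(1+\beta^2 i)$ appears transparently). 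The paper instead inducts forward on the horizon $T$: the time-shift Lemma~\ref{argumentofphistatement} (resting on the homogeneous Markov property) reduces the $T$-step problem to optimising only the first-period position, but the objective is then a full $T$-dimensional Gaussian integral over the innovation vector, which forces the authors to prove positive definiteness of the matrix $\mathbf{A}_T$, the row-sum identity of Lemma~\ref{sumofrowsstatement}, the determinant recursion of Lemma~\ref{determinant1statement} (the exact counterpart of your $c$-recursion) and the $(1,1)$ entry of $\mathbf{A}_T^{-1}$. Your route buys a much shorter computation; the paper's buys an explicit representation of the entire controlled wealth as a quadratic form in the noise. The one point you rightly single out --- upgrading the Bellman stationarity to optimality over all of $\Phi$, including strategies with expected utility $-\infty$ --- is precisely the role of Lemma~\ref{argumentofphistatement} in the paper; in your scheme it is handled cleanly by observing that $N_t:=W_t^T(X_t)\,e^{-L_t^{\phi}}$ is a nonpositive supermartingale for every $\phi\in\Phi$ (so no integrability hypotheses are needed) with equality for the candidate strategy. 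Your treatment of the $\Phi_0$ problem as a finite-dimensional concave quadratic program conditional on $X_0$ coincides with the paper's, and your parts 3 and 4 (the Gaussian integral giving $1+\beta^2T/\sigma^2=(2-(T-1)\beta)/(\beta+2)$, and Stirling) match the paper's computations.
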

 
Our conclusion is that using accumulated past information leads to a qualitatively better strategy
(i.e. the expected utility using past information is the expected utility without past information
times a factor tending to $0$ much faster than that). 
In order to make a meaningful comparison, however, we should normalize our processes.
Hence in 3. above we assume stability and choose $X_0$ in such a way that
$X_t$, $t\in\mathbb{N}$ becomes stationary with $\mathbb{E}X_t^2=1$. 
We obtain the maximal expected utilities
both with and without past information.
They show that an increase in $\beta$ leads to an increase in \eqref{exp} and \eqref{expnomem}, 
so more information  from the
past (represented by 
lager $|\beta|$) leads to better performance. This platitude is, however, supported by precise formulae in the present case.
Note also that $\gamma_{\beta}$ is continuous at $\beta=0$ as well.

\section{Computations and proofs}

%Therefore we define $(\hat{\phi}_t^T)_{1\leq t\leq T}$ strategies as

%\begin{equation}\label{maintaskfirst} 
%\hat{\phi_t^T}:=\mathrm{arg}\max _{\phi_t^T} \mathbb{E}\left[U(L_T^{\phi})|\mathcal{F}_{t-1}\right].
%\end{equation}
%Before the calculation we consider a proposition which will be useful. 

The following lemma will help to construct the optimal strategy $\phi^*$ inductively.

\begin{lemma}\label{argumentofphistatement} Let $\tilde{\phi}_t$, $t=1,\ldots,T-1$ be a trading
strategy up to time $T-1$ given by $\tilde{\phi}_t=f_t(X_0,\ldots, X_{t-1})$ with Borel functions $f_t$. If $\tilde{\phi}$ is optimal up to $T-1$, i.e.
for all strategies $\phi_t$, $t=1,\ldots,T-1$ one has 
\begin{equation}\label{porous}
\mathbb{E}\left[U(L^{\phi}_{T-1})\vert\mathcal{F}_{0}\right]\leq \mathbb{E}\left[U(L^{\tilde{\phi}}_{T-1})\vert\mathcal{F}_{0}\right],
\end{equation}
then the strategy defined by $\bar{\phi}_t:=f_{t-1}(X_1,\ldots,X_{t-1})$, $t=2,\ldots,T$ is optimal between $1$ and $T$, i.e. for all
strategies $\psi_t$, $t=2,\ldots,T$ one has
\begin{equation}\label{pp}
\mathbb{E}\left[U\left(\sum_{t=2}^T {\psi}_t (X_t-X_{t-1})\right)\vert\mathcal{F}_{1}\right]\leq \mathbb{E}\left[U\left(\sum_{t=2}^T \bar{\phi}_t (X_t-X_{t-1})\right)\vert\mathcal{F}_{1}\right].
\end{equation}
\end{lemma}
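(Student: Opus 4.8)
The plan is to exploit the time-homogeneity of the autoregressive dynamics \eqref{80}: shifting the time index by one leaves the structure of the problem unchanged, so that the optimiser on the horizon $\{1,\ldots,T\}$ should be precisely the time-shifted copy of the optimiser on $\{0,\ldots,T-1\}$, namely $\tilde\phi$. Concretely, I would condition everything on $\mathcal F_1$, recognise the resulting problem as a horizon-$(T-1)$ problem started from $X_1$ in place of $X_0$, and then invoke the optimality hypothesis \eqref{porous}.

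First I would record the time-shift invariance of the law. Set $Y_s:=X_{s+1}$ for $0\le s\le T-1$ and $\tilde\varepsilon_s:=\varepsilon_{s+1}$ for $s\ge1$; then $Y_{s+1}=\alpha Y_s+\sigma\tilde\varepsilon_{s+1}$, where $(\tilde\varepsilon_s)_{s\ge1}$ is i.i.d.\ standard Gaussian and independent of $Y_0=X_1$ (since $(\varepsilon_s)_{s\ge2}$ is independent of $\sigma(X_0,\varepsilon_1)\supseteq\mathcal F_1$). Hence $(Y_s)_{0\le s\le T-1}$ obeys exactly the recursion \eqref{80} driven by noise of the same law, so for every Borel $F\colon\mathbb R^{T}\to\mathbb R$ and every $z\in\mathbb R$ one has, in terms of the canonical (everywhere-defined) versions,
\[
\mathbb E\!\left[F(Y_0,\ldots,Y_{T-1})\mid Y_0=z\right]=\mathbb E\!\left[F(X_0,\ldots,X_{T-1})\mid X_0=z\right].
\]
I would also use that, since $X_0$ is independent of $(\varepsilon_s)_{s\ge1}$, the conditional expectation given $\mathcal F_0$ of any functional of $(X_0,\ldots,X_{T-1})$ admits a canonical version $z\mapsto\mathbb E[\,\cdot\mid X_0=z]$ valid for \emph{every} $z$, and, by the Markov property, the conditional expectation given $\mathcal F_1$ of any functional of $(X_1,\ldots,X_T)$ is an honest function of $X_1$ alone.

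Next I would reduce \eqref{pp} to \eqref{porous}. Fix a predictable $\psi$, so $\psi_t=g_t(X_0,\ldots,X_{t-1})$, $t=2,\ldots,T$, with Borel functions $g_t$; reindexing by $s=t-1$ gives $\sum_{t=2}^T\psi_t(X_t-X_{t-1})=\sum_{s=1}^{T-1}g_{s+1}(X_0,Y_0,\ldots,Y_{s-1})(Y_s-Y_{s-1})$. Conditioning on $\mathcal F_1$ freezes $X_0$ at its realised value $x_0$, so the right-hand side becomes $F(Y_0,\ldots,Y_{T-1})$ for the Borel function $F(y_0,\ldots,y_{T-1}):=\sum_{s=1}^{T-1}g_{s+1}(x_0,y_0,\ldots,y_{s-1})(y_s-y_{s-1})$ (depending on the parameter $x_0$). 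Setting $\phi'_s:=g_{s+1}(x_0,X_0,\ldots,X_{s-1})$ for $1\le s\le T-1$ gives a bona fide element of $\Phi$ on $\{1,\ldots,T-1\}$ (it is Borel in $X_0,\ldots,X_{s-1}$) with $F(X_0,\ldots,X_{T-1})=L^{\phi'}_{T-1}$, and the identity above yields
\[
\mathbb E\!\left[U\!\Big(\sum_{t=2}^T\psi_t(X_t-X_{t-1})\Big)\,\Big|\,\mathcal F_1\right]\bigg|_{X_0=x_0,\,X_1=z}=\mathbb E\!\left[U\!\big(L^{\phi'}_{T-1}\big)\,\Big|\,X_0=z\right].
\]
Running the same computation with $\psi=\bar\phi$, whose components $f_{t-1}(X_1,\ldots,X_{t-1})$ carry no $X_0$-dependence, the shifted strategy is exactly $\tilde\phi$, so
\[
\mathbb E\!\left[U\!\Big(\sum_{t=2}^T\bar\phi_t(X_t-X_{t-1})\Big)\,\Big|\,\mathcal F_1\right]\bigg|_{X_1=z}=\mathbb E\!\left[U\!\big(L^{\tilde\phi}_{T-1}\big)\,\Big|\,X_0=z\right].
\]
Finally I would invoke the optimality hypothesis \eqref{porous} in its canonical version applied to $\phi=\phi'$, i.e.\ $\mathbb E[U(L^{\phi'}_{T-1})\mid X_0=z]\le\mathbb E[U(L^{\tilde\phi}_{T-1})\mid X_0=z]$ for every $z$; chaining this with the two displays above shows that the inequality in \eqref{pp} holds at $(X_0,X_1)=(x_0,z)$ for every $(x_0,z)\in\mathbb R^2$, hence $\mathbb P$-a.s., which is the assertion.

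The main difficulty I anticipate is purely bookkeeping around the time shift rather than any genuine analysis: one must verify carefully that freezing $X_0$ turns the arbitrary predictable $\psi$ into a legitimate predictable strategy for the shorter horizon, and that the shift identity for the conditional laws is applied to the right functional and holds for \emph{all} values of the conditioning variable (not merely almost every value), so that it may legitimately be evaluated at $X_1$, whose law need not be dominated by that of $X_0$. The possibility that $U$ be integrated to $-\infty$ is harmless, since only order-preserving operations are used throughout.
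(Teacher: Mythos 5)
Your proposal is correct and follows essentially the same route as the paper: exploit time-homogeneity of the Markov chain to identify the conditional law of $(X_2,\dots,X_T)$ given $X_1=y$ with that of $(X_1,\dots,X_{T-1})$ given $X_0=y$, freeze $X_0$ at a parameter value to turn the arbitrary predictable $\psi$ into a legitimate horizon-$(T-1)$ strategy, and then invoke \eqref{porous}. You are in fact somewhat more explicit than the paper about the one delicate point — that the optimality hypothesis must be available at \emph{every} value of the conditioning variable (via canonical versions), since the law of $X_1$ need not be dominated by that of $X_0$ — which the paper handles only implicitly through its choice of regular conditional laws.
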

\begin{proof} 
Let us denote by $\ell(dy_T,\ldots,dy_2|y_1,y_0)$ the conditional law of $(X_T,\ldots,X_2)$ w.r.t. 
to $X_1=y_1$, $X_0=y_0$. We fix a regular version (see III. 70--73 in \cite{dm}).
As $X$ is a homogeneous Markov chain, $\ell$ does not depend on $y_0$ (hence we will write, with a slight abuse of notation, $\ell(dy_T,\ldots,dy_2|y_1)$)
and $\ell(dy_{T-1},\ldots,dy_1|y_0)$ is also the density function of  $(X_{T-1},\ldots,X_1)$ conditional to $X_0=y_0$

Let $\psi_t=g_t(X_0,\ldots,X_{t-1})$, $t=2,\ldots,T$ with some Borel functions $g_t$. Define for each $z\in\mathbb{R}$
the strategy $\phi_t:=\phi_t(z)=g_{t+1}(z,X_0,\ldots,X_{t-1})$, $t=1,\ldots,T-1$. By \eqref{porous},
\begin{eqnarray}\label{ppp}
\int_{\mathbb{R}^{T-1}}U\left(\sum_{t=1}^{T-1} g_{t+1}(z,y_0,\ldots,y_{t-1})(y_t-y_{t-1})\right) 
\ell(dy_{T-1},\ldots,dy_1|y_0)
\leq\\
\nonumber
\int_{\mathbb{R}^{T-1}}U\left(\sum_{t=1}^{T-1} f_t(y_0,\ldots,y_{t-1})(y_t-y_{t-1})\right) \ell(dy_{T-1},\ldots,dy_1|y_0),
\end{eqnarray}
for a.e. $y_0$ (with respect to the law of $X_0$). Note that the right-hand side can be rewritten as
\begin{eqnarray}\label{pppp}
\int_{\mathbb{R}^{T-1}}U\left(\sum_{t=2}^{T} f_{t-1}(y_1,\ldots,y_{t-1})(y_t-y_{t-1})\right) \ell(dy_{T},\ldots,dy_2|y_1) 
=\\ \nonumber
\mathbb{E}\left[U\left(\sum_{t=2}^T \bar{\phi}_t (X_t-X_{t-1})\right)|X_1=y_1,X_0=y_0\right],
\end{eqnarray}
for a.e. $(y_1,y_0)$ w.r.t. the law of $(X_0,X_1)$, note the Markov property again. 
Similarly, the left-hand side of \eqref{ppp} equals
\begin{eqnarray}\label{ppppp}
\int_{\mathbb{R}^{T-1}}U\left(\sum_{t=2}^{T-2} g_{t}(z,y_1,\ldots,y_{t-1})(y_t-y_{t-1})\right) \ell(dy_{T},\ldots,dy_2|y_1) =\\ \nonumber \mathbb{E}\left[U\left(\sum_{t=2}^T {\psi}_t (X_t-X_{t-1})\right)|X_1=y_1,X_0=z\right]. 
\end{eqnarray}
Now plugging in $z:=y_0$ we obtain \eqref{pp} from \eqref{ppp}, \eqref{pppp} and \eqref{ppppp}. 
\end{proof}

After these preparations we are able to give an explicit solution for the optimal strategies of the wealth process in case of the price is an autoregressive process.

In this Section we prove \autoref{maintheorem}, first we focus on the case where the investor 
uses past information. We consider the case $T=1$, so the wealth process according to 
\eqref{logwealth} takes the form
\begin{equation}
L_1^{\phi}=\phi_1(X_1-X_0)=\phi_1(\beta X_0+\sigma\varepsilon_1).
\end{equation}

%Without loss of generality we set $L_0=0$. 
We have
\begin{equation}
	\mathbb{E}\left[e^{-\phi_1(\beta X_0+\sigma \varepsilon _1)} | X_0=z\right]
	= e^{-\phi_1\beta z} \mathbb{E}\left[e^{-\phi_1 \sigma \varepsilon_1}\right]=
	e^{-\phi_1\beta z}e^{\frac{\phi_1^2\sigma^2}{2}},
\end{equation}
hence we get
\[
\mathrm{arg}\min _{\phi_1}e^{-\phi_1\beta z+\frac{(\phi_1\sigma)^2}{2}}
 = \mathrm{arg}\min_{\phi_1}e^{\frac{1}{2}\left(\phi_1\sigma-\frac{\beta z}{\sigma}\right)^2-\frac{\beta^2 z^2}{2\sigma^2}}
 =\frac{\beta z}{\sigma^2}=\frac{\beta z}{\sigma^2}\theta_1^1=\hat{\phi}_1^1(z), \]
 because $\theta_1^1=1$. So we proved the first part of \autoref{maintheorem} for $T=1$. Now let's assume that  
 \eqref{strategies} is true for $T-1$, i.e.
 \begin{equation}\label{opp}
\phi^*_t:=\hat{\phi}_t^{T-1}(X_{t-1})\mbox{ with }\hat{\phi}_t^{T-1}(z)=\frac{\beta z}{\sigma^2}\theta_t^{T-1} \quad 1\leq t\leq T-1
 \end{equation}
satisfies \eqref{90} for all $\phi\in\Phi$. 
 We will prove that \eqref{strategies} also holds for $T$. By Lemma \ref{argumentofphistatement},
 for all $\psi\in\Phi$,
 \begin{eqnarray}\nonumber
\mathbb{E}[e^{-L^{\psi}_T}|\mathcal{F}_0] = \mathbb{E}[e^{-\psi_1(X_1-X_0)}
\mathbb{E}[e^{-\sum_{j=2}^T \psi_j(X_j-X_{j-1})}|\mathcal{F}_1]\vert\mathcal{F}_0] &\geq &\\ \nonumber
\mathbb{E}[e^{-\psi_1(X_1-X_0)}
\mathbb{E}[e^{-\sum_{j=2}^T \hat{\phi}_{j-1}^{T-1}(X_{j-1})(X_j-X_{j-1})}|\mathcal{F}_1]\vert\mathcal{F}_0]&=&\\
\mathbb{E}[e^{-\psi_1(X_1-X_0)}
\mathbb{E}[e^{-\sum_{j=2}^T \hat{\phi}_{j}^{T}(X_{j-1})(X_j-X_{j-1})}|\mathcal{F}_1]\vert\mathcal{F}_0],\label{parr} & &
 \end{eqnarray}

% \[ \hat{\phi}_t^{T}(z)	= \hat{\phi}_{t-1}^{T-1}(z) 
% =\frac{\beta z}{\sigma^2}\theta_{t-1}^{T-1}
% =\frac{\beta z}{\sigma^2}\theta_t^T,\] 
 
% because $\theta_{t-1}^{T-1}=\theta_t^T$. Now we focus on $\hat{\phi}_1^{T}$. 
since $\phi_{j-1}^{T-1}=\phi_j^T$. Now define the trading strategy 
$\omega=(\phi, \hat{\phi}_2^T(X_1), \dots , \hat{\phi_T^T}(X_{T-1}))$ and the function 
$Q_T\,:\,\mathbb{R}^{T+2} \rightarrow\mathbb{R}$ such that
\begin{equation}
Q_T(\phi,X_0,\mathbf{\varepsilon}):=L_T^{\omega},\quad\textrm{where }\mathbf{\varepsilon}=(\varepsilon_1, \dots \varepsilon_T)^{\mathbf{T}}.
\end{equation}
Hence, according to \eqref{parr}, it remains to find $\phi$ which minimizes
\begin{equation}\label{phi1def}
\mathbb{E}\left[e^{-Q_T(\phi,X_0,\mathbf{\varepsilon})}|X_0=z\right].
\end{equation}
If we prove that $\phi=\hat{\phi}_1^{T}(z)$ does the job then we will be able to conclude that the optimal
strategy for time horizon $T$ is indeed as given in \eqref{opp} for $T-1$.

%\begin{equation}\label{phi1def}
%\hat{\phi}_1^{T}(z)=\mathrm{arg}\min_{\phi}\mathbb{E}\left[e^{-Q_T(\phi,X_0,\mathbf{\varepsilon})}|X_0=z\right].
%\end{equation}

To compute the minimiser $\phi$ we will write $Q_T(\phi, X_0, \varepsilon)$ 
in a sum of a quadratic, a linear and a constant function of $\mathbf{\varepsilon}$.

\begin{eqnarray*}
 	Q_T(\phi,X_0,\mathbf{\varepsilon})&=&\phi(X_1-X_0)+\sum_{j=2}^T\hat{\phi}_j^T(X_{j-1})(X_j-X_{j-1}) \\
 	   &=&\phi(X_1-X_0)+\sum_{j=2}^T\frac{\beta\theta_j^T}{\sigma^2}\underbrace{X_{j-1}(X_j-X_{j-1})}_{A_j}
\end{eqnarray*}

For $A_j$ we have
\begin{eqnarray*}
 	A_j&=&\left(\alpha^{j-1}X_0+\sigma\sum_{i=1}^{j-1}\alpha^{j-i-1}\varepsilon_i\right)\left(\alpha^{j}X_0+
 		\sigma\sum_{i=1}^{j}\alpha^{j-i}\varepsilon_i - \alpha^{j-1}X_0-\sigma\sum_{i=1}^{j-1}\alpha^{j-i-1}\varepsilon_i\right) \\
 	&=&\left(\alpha^{j-1}X_0+\sigma\sum_{i=1}^{j-1}\alpha^{j-i-1}\varepsilon_i\right)\left(\alpha^{j-1}\beta X_0+\sigma\varepsilon_j +
 		\sigma\sum_{i=1}^{j-1}\alpha^{j-i-1}\beta\varepsilon_i\right) \\
 	&=&\alpha^{2j-2}\beta X_0^2+\sigma\beta X_0\sum_{i=1}^{j-1}\alpha^{2j-i-2}\varepsilon_i+\sigma X_0\alpha^{j-1}\varepsilon_j+ 
 		\sigma^2\sum_{i=1}^{j-1}\alpha^{j-i-1}\varepsilon_i\varepsilon_j+\sigma\beta X_0\sum_{i=1}^{j-1}\alpha^{2j-i-2}\varepsilon_i \\
 	&\,& +\sigma^2\beta\left(\sum_{i=1}^{j-1}\alpha^{j-i-1}\varepsilon_i\right)^2 \\
 	&=&\underbrace{\alpha^{2j-2}\beta X_0^2}_{B_1(j)}+ \underbrace{2\sigma\beta X_0\sum_{i=1}^{j-1}\alpha^{2j-i-2}\varepsilon_i}_{B_2(j)} 
 		+\underbrace{\sigma X_0\alpha^{j-1}\varepsilon_j}_{B_3(j)}
 		+ \underbrace{\sigma^2\sum_{i=1}^{j-1}\alpha^{j-i-1}\varepsilon_i\varepsilon_j}_{B_4(j)} 
 		+\underbrace{\sigma^2\beta\sum_{i=1}^{j-1}\sum_{k=1}^{j-1}\varepsilon_i\varepsilon_k\alpha^{2j-i-k-2}}_{B_5(j)}. \\
\end{eqnarray*}

Let's substitute this into  $Q_T(\phi,X_0,\mathbf{\varepsilon})$.
\begin{eqnarray*}
 	Q_T(\phi,X_0,\mathbf{\varepsilon})&=&\phi(X_1-X_0)+\sum_{j=2}^T\frac{\beta\theta_j^T}{\sigma^2}(B_1(j)+B_2(j)+B_3(j)+B_4(j)+B_5(j)) \\
 		 &=&\phi(\beta X_0+\sigma\varepsilon_1)+\underbrace{\sum_{j=2}^T\frac{\beta\theta_j^T}{\sigma^2}B_1(j)}_{C_1}
 		 	+ \underbrace{\sum_{j=2}^T\frac{\beta\theta_j^T}{\sigma^2}B_2(j)}_{C_2} 
 		 	+\underbrace{\sum_{j=2}^T\frac{\beta\theta_j^T}{\sigma^2}B_3(j)}_{C_3}+ \\
 		 &+&\underbrace{\sum_{j=2}^T\frac{\beta\theta_j^T}{\sigma^2}B_4(j)}_{C_4} 
 		 	+\underbrace{\sum_{j=2}^T\frac{\beta\theta_j^T}{\sigma^2}B_5(j)}_{C_5}
\end{eqnarray*}
 	
We compute each $C_n$ separately. 
\begin{eqnarray*}
 	C_1&=&\frac{\beta^2 X_0^2}{\sigma^2}\sum_{j=2}^T\theta_j^T\alpha^{2j-2}. \\
 	C_2&=&\sum_{j=2}^T\frac{\beta\theta_j^T}{\sigma^2}2\sigma\beta X_0\sum_{i=1}^{j-1}\alpha^{2j-i-2}\varepsilon_i
 	   =\sum_{j=1}^{T-1}\sum_{i=1}^j\frac{2\beta^2X_0}{\sigma}\theta_{j+1}^T\alpha^{2j-i}\varepsilon_i \\
 	   &=&\sum_{i=1}^{T-1}\sum_{j=i}^{T-1}\frac{2\beta^2X_0}{\sigma}\theta_{j+1}^T\alpha^{2j-i}\varepsilon_i
 	   =\sum_{i=1}^{T-1}\varepsilon_i\left(\sum_{j=i}^{T-1}\frac{2\beta^2X_0}{\sigma}\theta_{j+1}^T\alpha^{2j-i}\right).\\
 	C_3&=&\sum_{j=2}^T\frac{\beta\theta_j^T}{\sigma^2}\sigma X_0\alpha^{j-1}\varepsilon_j
 		 =\sum_{j=2}^T\varepsilon_j\left(\frac{\beta X_0}{\sigma}\theta_{j}^T\alpha^{j-1}\right).\\
 	C_4&=&\sum_{j=2}^T\frac{\beta\theta_j^T}{\sigma^2}\sigma^2\sum_{i=1}^{j-1}\alpha^{j-i-1}\varepsilon_i\varepsilon_j
 		 =\sum_{j=2}^T\sum_{i=1}^{j-1}\varepsilon_i\varepsilon_j\left(\beta\theta_j^T\alpha^{j-i-1}\right).\\	C_5&=&\sum_{j=2}^T\frac{\beta\theta_j^T}{\sigma^2}\sigma^2\beta\sum_{i=1}^{j-1}\sum_{k=1}^{j-1}\varepsilon_i\varepsilon_k\alpha^{2j-i-k-2}
 		  = \beta^2\sum_{j=1}^{T-1}\sum_{i=1}^j\sum_{k=1}^j\theta_{j+1}^T\varepsilon_i\varepsilon_k\alpha^{2j-i-k} \\
 	&=&\beta^2\sum_{i=1}^{T-1}\sum_{j=1}^{T-1}\sum_{k=1}^j\theta_{j+1}^T\varepsilon_i\varepsilon_k\alpha^{2j-i-k} 
 		= \beta^2\sum_{i=1}^{T-1}\left(\sum_{k=1}^{i}\sum_{j=i}^{T-1}\theta_{j+1}^T\varepsilon_i\varepsilon_k\alpha^{2j-i-k}
 		+ \sum_{k=i+1}^{T-1}\sum_{j=k}^{T-1}\theta_{j+1}^T\varepsilon_i\varepsilon_k\alpha^{2j-i-k}\right) \\
 	&=&\beta^2\sum_{i=1}^{T-1}\left(\sum_{k=1}^{i}\varepsilon_i\varepsilon_k\sum_{j=i}^{T-1}\theta_{j+1}^T\alpha^{2j-i-k}
 		+ \sum_{k=i+1}^{T-1}\varepsilon_i\varepsilon_k\sum_{j=k}^{T-1}\theta_{j+1}^T\alpha^{2j-i-k}\right).
\end{eqnarray*}
 		
According to these, we can write $Q_T(\phi,X_0,\mathbf{\varepsilon})$ as
\begin{equation}\label{quadraticform}
 		Q_T(\phi,X_0,\mathbf{\varepsilon})=
 		\mathbf{\varepsilon^T}\left(\mathbf{A}_T-\frac{1}{2}\mathbf{I}\right)\mathbf{\varepsilon}+\mathbf{b^T}(\phi,X_0)\mathbf{\varepsilon}+c(\phi,X_0),
\end{equation}
 	
 	where $\mathbf{A}_T=[a_{ik}]\in\mathbb{R}^{T\times T}$ is a symmetric matrix with
 	\begin{eqnarray*} a_{ii}&=&\frac{1}{2}+\beta^2\sum_{j=i}^{T-1}\theta_{j+1}^T\alpha^{2j-2i}, \qquad 1\leq i\leq T-1, \\
 	 a_{TT}&=&\frac{1}{2}, \\
 	 a_{ik}&=&\frac{\beta\theta_i^T\alpha^{i-k-1}}{2}+\beta^2\sum_{j=i}^{T-1}\theta_{j+1}^T\alpha^{2j-i-k}, \qquad 1 \leq k<i\leq T-1,\\
 	 a_{Tk}&=&\frac{\beta\alpha^{T-k-1}}{2} \qquad 1\leq k\leq T-1;
 	\end{eqnarray*}
 	
 	 $\mathbf{b}\,:\,\mathbb{R}^2\rightarrow\mathbb{R}^T$ is a vector with
 	\begin{eqnarray*} b_1(\phi, X_0)&=&\phi\sigma+\frac{2\beta^2X_0}{\sigma}\sum_{j=1}^{T-1}\theta_{j+1}^T\alpha^{2j-1}, \\
 		 b_i(\phi, X_0)&=&\frac{\beta X_0}{\sigma}\theta_i^T\alpha^{i-1}+
 		 \frac{2\beta^2X_0}{\sigma}\sum_{j=i}^{T-1}\theta_{j+1}^T\alpha^{2j-i}, \qquad 2\leq i\leq T-1, \\
 		 b_T&=&\frac{\beta X_0}{\sigma}\alpha^{T-1};
 		 \end{eqnarray*}
 		 	
 	and $c\,:\,\mathbb{R}^2\rightarrow\mathbb{R}$ where
 	\[c(\phi,X_0)=\phi\beta X_0+\frac{\beta^2 X_0^2}{\sigma^2}\sum_{j=2}^T\theta_j^T\alpha^{2j-2}.\]
 	
 We need to compute the conditional expected utility given by
 \begin{equation}\label{integralformofexp}
 \mathbb{E}\left[e^{-Q_T(\phi,X_0,\mathbf{\varepsilon})}|X_0=z\right]= \frac{1}{\left(\sqrt{2\pi}\right)^T}\int_{\mathbb{R}^T}e^{-\mathbf{x^TA}_T\mathbf{x}-\mathbf{b^T}(\phi,z)\mathbf{x}-c(\phi,z)} \mathrm{d}\mathbf{x}.%\quad \textrm{by \autoref{quadraticform}}.
 \end{equation}
 In order to evaluate this integral we need some preparation.

 %\subsection{Computing the conditional expected utility function} 
 
 We know that
 \begin{equation}\label{integral1} 
 	\frac{1}{\sqrt{2\pi}}\int_{\mathbb{R}}e^{-ax^2-bx}\mathrm{d}x=\frac{1}{\sqrt{2a}}e^{\frac{b^2}{4a}},
 \end{equation}
for all  $b\in \mathbb{R}$ and $a>0$.

\begin {lemma}\label{integral2lemma}
Let $\mathbf{D}$ be a positive 
definite diagonal matrix with diagonal entries $d_1, d_2, \dots, d_n$, and let $\mathbf{b}\in\mathbb{R}^n$. Then
\begin{equation}\label{integral2}
	\frac{1}{\left(\sqrt{2\pi}\right)^n}\int_{\mathbb{R}^n}e^{-\mathbf{x^TDx}-\mathbf{b^Tx}}\mathrm{d}\mathbf{x}
	=\frac{1}{\sqrt{2^n\det \mathbf{D}}}e^{\frac{\mathbf{b^TD^{-1}b}}{4}}.
\end{equation}
\end{lemma}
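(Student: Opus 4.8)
The plan is to reduce the $n$-dimensional integral to a product of one-dimensional ones, which is possible precisely because $\mathbf{D}$ is diagonal. First I would write $\mathbf{x}^T\mathbf{D}\mathbf{x}=\sum_{i=1}^n d_i x_i^2$ and $\mathbf{b}^T\mathbf{x}=\sum_{i=1}^n b_i x_i$, so that the integrand factorizes as $\prod_{i=1}^n e^{-d_i x_i^2-b_i x_i}$. Since positive definiteness of $\mathbf{D}$ forces $d_i>0$ for every $i$, each factor is integrable over $\mathbb{R}$, the integrand is nonnegative, and Tonelli's theorem applies; hence the integral over $\mathbb{R}^n$ equals the iterated integral, which in turn equals $\prod_{i=1}^n \int_{\mathbb{R}} e^{-d_i x^2-b_i x}\,\mathrm{d}x$.

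Next I would apply the scalar identity \eqref{integral1} to each factor with $a:=d_i$ and $b:=b_i$, obtaining $\frac{1}{\sqrt{2\pi}}\int_{\mathbb{R}} e^{-d_i x^2-b_i x}\,\mathrm{d}x=\frac{1}{\sqrt{2 d_i}}\,e^{b_i^2/(4 d_i)}$. Multiplying over $i=1,\ldots,n$ gives
\[
\frac{1}{\left(\sqrt{2\pi}\right)^n}\int_{\mathbb{R}^n}e^{-\mathbf{x}^T\mathbf{D}\mathbf{x}-\mathbf{b}^T\mathbf{x}}\,\mathrm{d}\mathbf{x}
=\prod_{i=1}^n\frac{1}{\sqrt{2 d_i}}\,e^{b_i^2/(4 d_i)}
=\frac{1}{\sqrt{2^n\prod_{i=1}^n d_i}}\,\exp\!\left(\sum_{i=1}^n\frac{b_i^2}{4 d_i}\right).
\]
Finally I would identify the two pieces with the claimed closed form: since $\mathbf{D}$ is diagonal, $\det\mathbf{D}=\prod_{i=1}^n d_i$, so $\sqrt{2^n\prod_i d_i}=\sqrt{2^n\det\mathbf{D}}$; and $\mathbf{D}^{-1}$ is the diagonal matrix with entries $1/d_i$, whence $\mathbf{b}^T\mathbf{D}^{-1}\mathbf{b}=\sum_{i=1}^n b_i^2/d_i$ and the exponent above equals $\mathbf{b}^T\mathbf{D}^{-1}\mathbf{b}/4$. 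This is exactly \eqref{integral2}.

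There is no real obstacle here; the single point deserving a word of care is the appeal to Tonelli that legitimizes separating the variables, which is valid because $\mathbf{D}\succ 0$ gives absolute integrability of each one-dimensional factor. I expect that the genuinely substantive step comes only afterward, when this lemma is combined with an orthogonal diagonalization $\mathbf{A}_T=\mathbf{O}^T\mathbf{D}\mathbf{O}$ and the volume-preserving substitution $\mathbf{y}=\mathbf{O}\mathbf{x}$ to handle the non-diagonal quadratic form appearing in \eqref{integralformofexp}; but that reduction is not needed to prove the present statement.
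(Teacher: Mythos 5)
Your proof is correct and follows exactly the same route as the paper's: factor the integrand using the diagonality of $\mathbf{D}$, apply the scalar identity \eqref{integral1} to each coordinate, and reassemble the product as $\det\mathbf{D}$ and $\mathbf{b^TD^{-1}b}$. The only difference is your explicit appeal to Tonelli, which the paper leaves implicit.
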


\begin{proof} Using \eqref{integral1}
\begin{eqnarray*}
\frac{1}{\left(\sqrt{2\pi}\right)^n}\int_{\mathbb{R}^n}e^{-\mathbf{x^TDx}-\mathbf{b^Tx}}\mathrm{d}\mathbf{x}&=&
\frac{1}{\left(\sqrt{2\pi}\right)^n}\int_{\mathbb{R}^n}e^{-\sum_{i=1}^nd_ix_i^2-\sum_{i=1}^nb_ix_i}\mathrm{d}\mathbf{x}\\
&=&\prod_{i=1}^n\frac{1}{\sqrt{2\pi}}\int_{\mathbb{R}}e^{-d_ix_i^2-b_ix_i}\mathrm{d}x_i \\
&=& \prod_{i=1}^n\frac{1}{\sqrt{2d_i}}e^{\frac{b_i^2}{4d_i}}
 =\frac{1}{\sqrt{2^n\det \mathbf{D}}}e^{\frac{\mathbf{b^TD^{-1}b}}{4}}.
\end{eqnarray*}
\end{proof}

\begin{lemma}\label{integral3lemma}
Let $\mathbf{A}\in\mathbb{R}^{n\times n}$ be a symmetric, positive definite matrix , and $\mathbf{b}\in\mathbb{R}^n$. Then
\begin{equation}\label{integral3}
\frac{1}{\left(\sqrt{2\pi}\right)^n}\int_{\mathbb{R}^n}e^{-\mathbf{x^TAx}-\mathbf{b^Tx}}\mathrm{d}\mathbf{x} = \frac{1}{\sqrt{2^n\det \mathbf{A}}}e^{\frac{\mathbf{b^TA^{-1}b}}{4}}.
\end{equation}
\end{lemma}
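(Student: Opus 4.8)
The plan is to reduce the general symmetric positive definite case to the diagonal case already handled in \autoref{integral2lemma}, using the spectral theorem. First I would write $\mathbf{A}=\mathbf{O}\mathbf{D}\mathbf{O}^{\mathbf{T}}$ where $\mathbf{O}$ is orthogonal and $\mathbf{D}$ is the diagonal matrix of eigenvalues of $\mathbf{A}$; since $\mathbf{A}$ is positive definite, all diagonal entries of $\mathbf{D}$ are strictly positive, so $\mathbf{D}$ satisfies the hypotheses of \autoref{integral2lemma}. Then I would change variables in the integral by $\mathbf{x}=\mathbf{O}\mathbf{y}$. Since $\mathbf{O}$ is orthogonal its Jacobian determinant has absolute value $1$, so $\mathrm{d}\mathbf{x}=\mathrm{d}\mathbf{y}$ and the domain $\mathbb{R}^n$ is preserved.

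Under this substitution the quadratic form becomes $\mathbf{x}^{\mathbf{T}}\mathbf{A}\mathbf{x}=\mathbf{y}^{\mathbf{T}}\mathbf{O}^{\mathbf{T}}\mathbf{A}\mathbf{O}\mathbf{y}=\mathbf{y}^{\mathbf{T}}\mathbf{D}\mathbf{y}$ and the linear form becomes $\mathbf{b}^{\mathbf{T}}\mathbf{x}=\mathbf{b}^{\mathbf{T}}\mathbf{O}\mathbf{y}=(\mathbf{O}^{\mathbf{T}}\mathbf{b})^{\mathbf{T}}\mathbf{y}=:\tilde{\mathbf{b}}^{\mathbf{T}}\mathbf{y}$ with $\tilde{\mathbf{b}}:=\mathbf{O}^{\mathbf{T}}\mathbf{b}$. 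Applying \autoref{integral2lemma} to the diagonal matrix $\mathbf{D}$ and the vector $\tilde{\mathbf{b}}$ gives that the integral equals $\frac{1}{\sqrt{2^n\det\mathbf{D}}}e^{\tilde{\mathbf{b}}^{\mathbf{T}}\mathbf{D}^{-1}\tilde{\mathbf{b}}/4}$.

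Finally I would translate the answer back in terms of $\mathbf{A}$ and $\mathbf{b}$. We have $\det\mathbf{D}=\det(\mathbf{O}^{\mathbf{T}}\mathbf{A}\mathbf{O})=\det\mathbf{A}$, and $\tilde{\mathbf{b}}^{\mathbf{T}}\mathbf{D}^{-1}\tilde{\mathbf{b}}=\mathbf{b}^{\mathbf{T}}\mathbf{O}\mathbf{D}^{-1}\mathbf{O}^{\mathbf{T}}\mathbf{b}=\mathbf{b}^{\mathbf{T}}(\mathbf{O}\mathbf{D}\mathbf{O}^{\mathbf{T}})^{-1}\mathbf{b}=\mathbf{b}^{\mathbf{T}}\mathbf{A}^{-1}\mathbf{b}$, which is exactly \eqref{integral3}. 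There is no real obstacle here; the only points requiring a word of care are the justification that the change of variables preserves both the measure and the domain (orthogonality), and the observation that positive definiteness of $\mathbf{A}$ is precisely what guarantees $\mathbf{D}$ is admissible in \autoref{integral2lemma} and that $\mathbf{A}^{-1}$ exists. A small alternative, if one prefers to avoid invoking the spectral theorem, would be to use a Cholesky-type factorization, but the orthogonal diagonalization is the cleanest route and makes the determinant and inverse identities transparent.
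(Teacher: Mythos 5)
Your proof is correct and follows essentially the same route as the paper: orthogonal diagonalization $\mathbf{A}=\mathbf{S}\mathbf{D}\mathbf{S}^{\mathbf{T}}$, the substitution $\mathbf{y}=\mathbf{S}^{\mathbf{T}}\mathbf{x}$, reduction to Lemma \ref{integral2lemma}, and the identities $\det\mathbf{D}=\det\mathbf{A}$ and $\mathbf{A}^{-1}=\mathbf{S}\mathbf{D}^{-1}\mathbf{S}^{\mathbf{T}}$. No discrepancies to report.
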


\begin{proof} Since $\mathbf{A}$ is symmetric, there is an $\mathbf{S}$ orthonormal, and a $\mathbf{D}$ diagonal matrix for which $\mathbf{SDS^{-1}}=\mathbf{SDS^{T}}=\mathbf{A}$ and $|\det \mathbf{S}|=1$.
Using Lemma \ref{integral2lemma} and setting $\mathbf{y}:=\mathbf{{S^Tx}}$
\begin{eqnarray*}
\frac{1}{\left(\sqrt{2\pi}\right)^n}\int_{\mathbb{R}^n}e^{-\mathbf{x^TAx}-\mathbf{b^Tx}}\mathrm{d}\mathbf{x}
	&=& \frac{1}{\left(\sqrt{2\pi}\right)^n}\int_{\mathbb{R}^n}e^{-\mathbf{x^TSDS^Tx}-\mathbf{b^Tx}}\mathrm{d}\mathbf{x}
	=\frac{1}{\left(\sqrt{2\pi}\right)^n}\int_{\mathbb{R}^n}e^{-\mathbf{(S^Tx)^TDS^Tx}-\mathbf{b^Tx}}\mathrm{d}\mathbf{x} \\
	&=&\frac{1}{\left(\sqrt{2\pi}\right)^n}\int_{\mathbb{R}^n}e^{-\mathbf{y^TDy}-\mathbf{b^TSy}}|\det \mathbf{S}|\mathrm{d}\mathbf{y}
	=\frac{1}{\left(\sqrt{2\pi}\right)^n}\int_{\mathbb{R}^n}e^{-\mathbf{y^TDy}-\mathbf{(S^Tb)^Ty}}\mathrm{d}\mathbf{y}\\
	&=&\frac{1}{\sqrt{2^n\det \mathbf{D}}}e^{\frac{\mathbf{(S^Tb)^TD^{-1}S^Tb}}{4}}
	=\frac{1}{\sqrt{2^n\det\mathbf{D}}}e^{\frac{\mathbf{b^TSD^{-1}S^Tb}}{4}}\\
	&=&\frac{1}{\sqrt{2^n\det\mathbf{A}}}e^{\frac{\mathbf{b^TA^{-1}b}}{4}},
\end{eqnarray*}
since $\det\mathbf{D}=\det\mathbf{A}$ and $\mathbf{A^{-1}}=\mathbf{SD^{-1}S^T}$.
%, because 
%\[\mathbf{A(SD^{-1}S^T)}=\mathbf{SDS^TSD^{-1}S^T}=\mathbf{SDD^{-1}S^T}=\mathbf{SS^T}=\mathbf{I}.\]
\end{proof}

Now we can compute the expression in \eqref{integralformofexp} using Lemma \ref{integral3lemma}:
\begin{equation}\label{matrixformofexp}
\mathbb{E}\left[e^{-Q_T(\phi,X_0,\mathbf{\varepsilon})}|X_0=z\right]
=\frac{1}{\sqrt{2^T\det \mathbf{A}_T}}e^{\frac{\mathbf{b^T}(\phi,z)\mathbf{A}_T\mathbf{^{-1}b}(\phi,z)}{4}-c(\phi,z)}.
\end{equation}

We proceed to examining the determinant of $\mathbf{A}_T$ 
to prove that $\mathbf{A}_T$ is positive definite (as \eqref{matrixformofexp} holds only in this case)
and we will need to compute one element of the inverse matrix, $\left(\mathbf{A^{-1}}\right)_{1,1}$. 

%\subsection{The determinant of the $\mathbf{A}_T$ matrix}
 
 First we present a lemma which will be very useful later.
 \begin{lemma} \label{simplelemma}
 For $\theta_t^T$ (defined in \autoref{maintheorem}) and for all $m\leq n$
 \begin{equation}
 \sum_{i=m}^n\theta_i^T\alpha^i=(T+1-m)\alpha^m-(T-n)\alpha^{n+1}.
 \end{equation}
\end{lemma}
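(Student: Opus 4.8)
The plan is to prove the identity $\sum_{i=m}^n \theta_i^T \alpha^i = (T+1-m)\alpha^m - (T-n)\alpha^{n+1}$ by recalling that $\theta_i^T = 1-(T-i)\beta = 1-(T-i)(\alpha-1)$ and splitting the sum into a geometric part and a part involving $i\alpha^i$ (or, more cleverly, avoiding the second sum altogether). First I would write $\theta_i^T\alpha^i = \alpha^i - (T-i)(\alpha-1)\alpha^i = \alpha^i - (T-i)(\alpha^{i+1}-\alpha^i)$, and then try to recognize $(T-i)(\alpha^{i+1}-\alpha^i)$ as a telescoping difference.

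The key observation I would exploit is that $(T-i)\alpha^{i+1} - (T-i)\alpha^i$ can be rearranged using an Abel-summation / summation-by-parts trick: set $u_i := (T-i)\alpha^i$. Then $u_{i+1} - u_i = (T-i-1)\alpha^{i+1} - (T-i)\alpha^i = (T-i)\alpha^{i+1} - \alpha^{i+1} - (T-i)\alpha^i$, so that $(T-i)(\alpha^{i+1}-\alpha^i) = (u_{i+1}-u_i) + \alpha^{i+1}$. Substituting back,
\begin{equation*}
\theta_i^T\alpha^i = \alpha^i - (u_{i+1}-u_i) - \alpha^{i+1}.
\end{equation*}
Now summing over $i$ from $m$ to $n$, the term $\alpha^i - \alpha^{i+1}$ telescopes and the term $u_{i+1}-u_i$ telescopes, giving
\begin{equation*}
\sum_{i=m}^n \theta_i^T\alpha^i = (\alpha^m - \alpha^{n+1}) - (u_{n+1} - u_m) = \alpha^m - \alpha^{n+1} - (T-n-1)\alpha^{n+1} + (T-m)\alpha^m.
\end{equation*}
Collecting the $\alpha^m$ terms yields $(1 + T - m)\alpha^m = (T+1-m)\alpha^m$, and the $\alpha^{n+1}$ terms yield $-(1 + T - n - 1)\alpha^{n+1} = -(T-n)\alpha^{n+1}$, which is exactly the claimed formula.

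Since the identity might in principle be used at $\alpha = 1$ (i.e. $\beta = 0$) as well, I would note that the derivation above is purely algebraic and involves no division by $\alpha - 1$, so it holds for every real $\alpha$; alternatively one can check the $\alpha=1$ case directly, where both sides reduce to $\sum_{i=m}^n 1 = n-m+1$ versus $(T+1-m) - (T-n) = n+1-m$. There is no real obstacle here — the only mild subtlety is choosing the right telescoping grouping so that the $i\alpha^i$ terms are handled without an extra summation formula; once $u_i = (T-i)\alpha^i$ is identified as the antidifference, the computation is routine. I would also double-check the boundary indices ($m \le n$ is assumed, and the case $m = n$ should be verified separately as a sanity check: the left side is $\theta_m^T\alpha^m = (1-(T-m)\beta)\alpha^m$ and the right side is $(T+1-m)\alpha^m - (T-m)\alpha^{m+1}$, which agree after expanding $\alpha^{m+1} = \alpha\cdot\alpha^m$ and $\beta = \alpha-1$).
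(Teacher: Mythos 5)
Your proof is correct and is essentially the paper's argument: the paper writes $\theta_i^T\alpha^i=(T+1-i)\alpha^i-(T-i)\alpha^{i+1}$ and telescopes after reindexing, which is the same telescoping you obtain via the antidifference $u_i=(T-i)\alpha^i$ (your decomposition just splits the paper's telescoping term $v_i=(T+1-i)\alpha^i$ as $\alpha^i+u_i$). The extra remarks on $\alpha=1$ and the boundary cases are fine but not needed.
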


\begin{proof} 
\begin{eqnarray*}
\sum_{i=m}^n\theta_i^T\alpha^i&=&\sum_{i=m}^n(T+1-i-(T-i)\alpha)\alpha^i=\sum_{i=m}^n(T+1-i)\alpha^i-\sum_{i=m}^n(T-i)\alpha^{i+1} \\
		&=&\sum_{i=m}^n(T+1-i)\alpha^i-\sum_{i=m+1}^{n+1}(T+1-i)\alpha^i=(T+1-m)\alpha^m-(T-n)\alpha^{n+1}
	\end{eqnarray*}
\end{proof}

\begin{lemma}\label{sumofrowsstatement}
 For $\mathbf{A}_T=[a_{ij}]$ we have
 \begin{equation}\label{sumofrows}
 a_{1,n}-\beta\sum_{i=2}^Ta_{in}=0\quad \textrm{for all }2\leq n\leq T. \end{equation}
 \end{lemma}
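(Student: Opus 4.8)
The plan is to read \eqref{sumofrows} as a statement about the symmetric bilinear form carried by $\mathbf{A}_T$, and to exploit the description of $\mathbf{A}_T$ furnished by \eqref{quadraticform}. Fix $n\in\{2,\dots,T\}$ and put $v\in\mathbb{R}^T$ with $v_1:=1$, $v_i:=-\beta$ for $2\le i\le T$; then $\sum_{i=1}^T v_ia_{in}=a_{1,n}-\beta\sum_{i=2}^T a_{in}$, so \eqref{sumofrows} is precisely the assertion $\sum_{i=1}^T v_ia_{in}=0$, i.e.\ (by symmetry of $\mathbf{A}_T$) that the bilinear form $(x,y)\mapsto\sum_{i,k=1}^T x_ia_{ik}y_k$ vanishes at $(v,e_n)$, with $e_n$ the $n$-th standard basis vector. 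The first task is thus to have a workable expression for this bilinear form.

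To obtain one, I would note that $\phi(X_1-X_0)=\phi\beta X_0+\phi\sigma\varepsilon_1$ is affine in $\mathbf{\varepsilon}$, so the part of $Q_T$ quadratic in $\mathbf{\varepsilon}$ — which by \eqref{quadraticform} is the form attached to $\mathbf{A}_T-\tfrac12\mathbf{I}$ — equals the quadratic part of $\sum_{j=2}^T\frac{\beta\theta_j^T}{\sigma^2}X_{j-1}(X_j-X_{j-1})$. Writing $X_j-X_{j-1}=\beta X_{j-1}+\sigma\varepsilon_j$ and $X_{j-1}=\alpha^{j-1}X_0+\sigma Y_{j-1}$ with $Y_{j-1}:=\sum_{i=1}^{j-1}\alpha^{j-1-i}\varepsilon_i$, this quadratic part is $\beta\sum_{j=2}^T\theta_j^T(\beta Y_{j-1}^2+Y_{j-1}\varepsilon_j)$. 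Polarising (and recalling that a symmetric matrix is determined by its quadratic form), one obtains, with $Y_{j-1}(x):=\sum_{i=1}^{j-1}\alpha^{j-1-i}x_i$,
\[
\sum_{i,k=1}^T x_ia_{ik}y_k=\frac12\sum_{i=1}^T x_iy_i+\beta\sum_{j=2}^T\theta_j^T\Big(\beta\,Y_{j-1}(x)\,Y_{j-1}(y)+\tfrac12\big(Y_{j-1}(x)\,y_j+Y_{j-1}(y)\,x_j\big)\Big).
\]

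The key observation is that $v$ is chosen exactly so that $Y_{j-1}(v)=1$ for all $j\ge2$: indeed $Y_1(v)=v_1=1$, while for $j\ge3$ one has $Y_{j-1}(v)=\alpha^{j-2}-\beta\sum_{i=2}^{j-1}\alpha^{j-1-i}=\alpha^{j-2}-\beta\,\frac{\alpha^{j-2}-1}{\alpha-1}=1$, using $\alpha-1=\beta$ (when $\beta=0$ the claim is trivial, since then $\mathbf{A}_T=\tfrac12\mathbf{I}$). Substituting $x=v$, $y=e_n$ into the displayed identity and using $Y_{j-1}(v)=1$, $v_j=-\beta$ for $j\ge2$, $(e_n)_j=\mathbf{1}_{\{j=n\}}$ and $Y_{j-1}(e_n)=\alpha^{j-1-n}\mathbf{1}_{\{j\ge n+1\}}$, the two terms containing $Y_{j-1}(e_n)$ partially cancel and the whole expression collapses to
\[
\sum_{i=1}^T v_ia_{in}=-\frac{\beta}{2}+\frac{\beta^2}{2}\,\alpha^{-1-n}\sum_{j=n+1}^T\theta_j^T\alpha^{j}+\frac{\beta}{2}\,\theta_n^T .
\]
By Lemma \ref{simplelemma} the sum equals $(T-n)\alpha^{n+1}$ (empty, hence $0$, when $n=T$), so the middle term is $\tfrac{\beta^2(T-n)}{2}$; and since $\theta_n^T=1-(T-n)\beta$, the last term is $\tfrac{\beta}{2}-\tfrac{\beta^2(T-n)}{2}$. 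The three contributions cancel, giving \eqref{sumofrows}.

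I expect the only real care to lie in this collapse step: keeping the indicator $\mathbf{1}_{\{j\ge n+1\}}$ straight, noting that the hypothesis $n\ge2$ confines the relevant $j$-summation to $\{2,\dots,T\}$ so that every $v_j$ arising there is $-\beta$, and disposing of the boundary case $n=T$ (empty geometric sum, $\theta_T^T=1$) together with the generic one. Should the bilinear-form reformulation feel unmotivated, there is a fully mechanical alternative: insert the tabulated $a_{ij}$ directly, split $\sum_{i=2}^T a_{in}$ over $i<n$, $i=n$ and $i>n$, reindex, and reduce each resulting $\alpha$-sum by Lemma \ref{simplelemma} — no obstacle there, only more computation.
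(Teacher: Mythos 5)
Your proof is correct, but it takes a genuinely different route from the paper's. The paper proves Lemma \ref{sumofrowsstatement} by brute force: it treats $n=T$ separately, splits $\sum_{i=2}^T a_{i,n}$ into the ranges $i<n$, $i=n$, $n<i<T$, $i=T$, reindexes each piece, and invokes Lemma \ref{simplelemma} several times until everything cancels. You instead read $a_{1,n}-\beta\sum_{i\ge2}a_{i,n}$ as $v^{\mathbf T}\mathbf{A}_T e_n$ for $v=(1,-\beta,\dots,-\beta)^{\mathbf T}$, recover the bilinear form of $\mathbf{A}_T$ by polarising the $\varepsilon$-quadratic part of $Q_T$ from \eqref{quadraticform} (legitimate, since \eqref{quadraticform} is established before the lemma and a symmetric matrix is determined by its quadratic form; I checked that your polarised identity does reproduce the tabulated entries $a_{ik}$), and then exploit the telescoping identity $Y_{j-1}(v)=\alpha^{j-2}-\beta\sum_{i=2}^{j-1}\alpha^{j-1-i}=1$, which is exactly where $\beta=\alpha-1$ enters. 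After that the expression collapses to three terms, one application of Lemma \ref{simplelemma} with $m=n+1$ finishes it, and the case $n=T$ is absorbed as an empty sum rather than needing separate treatment. I verified the collapse: the $\frac12 v_n=-\frac\beta2$ term, the $\frac{\beta^2(T-n)}{2}$ term, and $\frac\beta2\theta_n^T=\frac\beta2-\frac{\beta^2(T-n)}{2}$ indeed sum to zero. What your approach buys is brevity and an explanation of \emph{why} the row combination of Lemma \ref{determinant1statement} works: $v$ is the direction along which all the accumulated-noise functionals $Y_{j-1}$ are constant. What the paper's approach buys is independence from \eqref{quadraticform} -- it needs only the entry table -- at the cost of a page of index gymnastics. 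One cosmetic caveat: the factor $\alpha^{-1-n}$ in your collapsed formula should be understood as bookkeeping for $\sum_{j=n+1}^T\theta_j^T\alpha^{j-1-n}$ (all exponents nonnegative), so nothing breaks at $\alpha=0$; the paper's own proof uses the same device.
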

 
\begin{proof} First we consider the case $n=T$,
 \begin{eqnarray*}
 		a_{1,T}-\beta\sum_{i=2}^Ta_{i,T}&=&\frac{\beta\alpha^{T-2}}{2}
 			-\beta\left(\frac{1}{2}+\sum_{i=2}^{T-1}\frac{\beta\alpha^{T-i-1}}{2}\right)=
 			\alpha^{T-2}-1-\beta\sum_{i=2}^{T-1}\alpha^{T-i-1} \\
 		&=&\alpha^{T-2}-1-\sum_{i=2}^{T-1}\alpha^{T-i}+\sum_{i=2}^{T-1}\alpha^{T-i-1}=
 		\alpha^{T-2}-1-\sum_{i=2}^{T-1}\alpha^{T-i}+\sum_{i=3}^{T}\alpha^{T-i} \\
 		&=&\alpha^{T-2}-1-\alpha^{T-2}+1=0.
 		\end{eqnarray*}
 	Then we consider the case $n\neq T$,
 	\begin{equation}\label{sumofrows2}
 	a_{1,n}-\beta\sum_{i=2}^Ta_{i,n}=a_{1,n}-\beta\sum_{i=2}^{n-1}a_{i,n}-\beta a_{n,n}-\beta\sum_{i=n+1}^{T-1}a_{i,n}-\beta a_{T,n}.
 	\end{equation}
 	We compute the sums separately.
 	\begin{eqnarray*}
 	\beta\sum_{i=2}^{n-1}a_{i,n}&=&\sum_{i=2}^{n-1}\left(\frac{\beta^2\theta_n^T\alpha^{n-i-1}}{2}
 		+\beta^3\sum_{j=n}^{T-1}\theta_{j+1}^T\alpha^{2j-n-i}\right) \\
 	&=&\frac{\beta^2\theta_n^T}{2}\sum_{i=2}^{n-1}\alpha^{n-i-1}+\beta^3\sum_{j=n}^{T-1}\theta_{j+1}^T\sum_{i=2}^{n-1}\alpha^{2j-n-i} \\
 	&=&\frac{\beta\theta_n^T}{2}\left(\sum_{i=2}^{n-1}\alpha^{n-i}-\sum_{i=2}^{n-1}\alpha^{n-i-1}\right)
 		+ \beta^2\sum_{j=n}^{T-1}\theta_{j+1}^T\left(\sum_{i=2}^{n-1}\alpha^{2j-n-i+1}-\sum_{i=2}^{n-1}\alpha^{2j-n-i}\right) \\
 	&=&\frac{\beta\theta_n^T}{2}\left(\alpha^{n-2}-1\right)
 		+\beta^2\sum_{j=n}^{T-1}\theta_{j+1}^T \left(\alpha^{2j-n-1}-\alpha^{2j-2n+1}\right) \\
 	&=&\frac{\beta\theta_n^T\alpha^{n-2}}{2}-\frac{\beta\theta_n^T}{2}+\beta^2\sum_{j=n}^{T-1}\theta_{j+1}^T\alpha^{2j-n-1}
 		-\beta^2\sum_{j=n}^{T-1}\theta_{j+1}^T\alpha^{2j-2n+1} \\
\beta\sum_{i=n+1}^{T-1}a_{in}&=&\sum_{i=n+1}^{T-1}\left(\frac{\beta^2\theta_i^T\alpha^{i-n-1}}{2} 
		+\beta^3\sum_{j=i}^{T-1}\theta_{j+1}^T\alpha^{2j-i-n}\right)\\
 	&=&\frac{\beta^2\alpha^{-n-1}}{2}\sum_{i=n+1}^{T-1}\theta_i^T\alpha^i
 		+ \beta^3\sum_{j=n+1}^{T-1}\sum_{i=n+1}^{j}\theta_{j+1}^T\alpha^{2j-i-n} \\
 	&=&\frac{\beta^2\alpha^{-n-1}}{2}\sum_{i=n+1}^{T-1}\theta_i^T\alpha^i
 	+\beta^2\sum_{j=n+1}^{T-1}\theta_{j+1}^T\left(\sum_{i=n+1}^{j}\alpha^{2j-i-n+1}
 		-\sum_{i=n+1}^{j}\alpha^{2j-i-n}\right) \\
 	&=&\frac{\beta^2\alpha^{-n-1}}{2}\left((T-n)\alpha^{n+1}-\alpha^T\right)
 		+\beta^2\sum_{j=n+1}^{T-1}\theta_{j+1}^T\left(\alpha^{2j-2n}-\alpha^{j-n}\right) \textrm{ by Lemma \ref{simplelemma}},\\
 	&=&\frac{\beta^2(T-n)}{2}-\frac{\beta^2\alpha^{T-n-1}}{2}
 		+ \beta^2\sum_{j=n+1}^{T-1}\theta_{j+1}^T\alpha^{2j-2n}-\beta^2\sum_{j=n+2}^{T}\theta_{j}^T\alpha^{j-n-1} \\
 	&=&\frac{\beta^2(T-n)}{2}-\frac{\beta^2\alpha^{T-n-1}}{2}
 		+ \beta^2\sum_{j=n+1}^{T-1}\theta_{j+1}^T\alpha^{2j-2n}-\beta^2\alpha(T-n-1)\textrm{ by Lemma \ref{simplelemma}}. 
 	\end{eqnarray*}	
 		
 		The other terms in \eqref{sumofrows2} are
 		\begin{eqnarray*}
 		a_{1,n}&=&\frac{\beta\theta_n^T\alpha^{n-2}}{2}+\beta^2\sum_{j=n}^{T-1}\theta_{j+1}^T\alpha^{2j-n-1} \\
 		\beta a_{nn}&=&\frac{\beta}{2}+\beta^3\sum_{j=n}^{T-1}\theta_{j+1}^T\alpha^{2j-2n} \\
 		\beta a_{Tn}&=&\frac{\beta^2\alpha^{T-n-1}}{2}.
 		\end{eqnarray*}
 		
Substituting these into \eqref{sumofrows2}:
\begin{eqnarray*}
 	a_{1,n}-\beta\sum_{i=2}^Ta_{i,n}&=&\frac{\beta\theta_n^T\alpha^{n-2}}{2}+\beta^2\sum_{j=n}^{T-1}\theta_{j+1}^T\alpha^{2j-n-1} 
 		-\frac{\beta\theta_n^T\alpha^{n-2}}{2}+\frac{\beta\theta_n^T}{2}-\beta^2\sum_{j=n}^{T-1}\theta_{j+1}^T\alpha^{2j-n-1}\\
 	&\,&+\beta^2\sum_{j=n}^{T-1}\theta_{j+1}^T\alpha^{2j-2n+1}
 		 -\frac{\beta}{2}-\beta^3\sum_{j=n}^{T-1}\theta_{j+1}^T\alpha^{2j-2n}-\frac{\beta^2(T-n)}{2}+\frac{\beta^2\alpha^{T-n-1}}{2} \\
 	&\,&-\beta^2\sum_{j=n+1}^{T-1}\theta_{j+1}^T\alpha^{2j-2n}+\beta^2\alpha(T-n-1)-\frac{\beta^2\alpha^{T-n-1}}{2} \\
 	&=&\frac{\beta-\beta^2(T-n)}{2}+\beta^2\sum_{j=n}^{T-1}\theta_{j+1}^T\alpha^{2j-2n}-\frac{\beta}{2}-\frac{\beta^2(T-n)}{2} \\
  &\,& -\beta^2\sum_{j=n+1}^{T-1}\theta_{j+1}^T\alpha^{2j-2n}+\beta^2\alpha(T-n-1)\\
 	&=&\beta^2\theta_{n+1}^T-\beta^2(T-n)+ \beta^2\alpha(T-n-1)\\
 	&=&\beta^2(\theta_{n+1}^T-(T-n-\alpha(T-n-1)))=0.
\end{eqnarray*}
\end{proof}
 		
\begin{definition} For a matrix $\mathbf{A}\in\mathbb{R}^{n\times n}$  
let $\mathbf{A}(i,j)\in\mathbb{R}^{(n-1)\times (n-1)}$ denote the appropriate minor matrix of $\mathbf{A}$,
i.e. the matrix obtained by omitting the \emph{i}th row and the \emph{j}th column of $\mathbf{A}$.
%, we get $\mathbf{A}(i,j)$.
\end{definition}
 	
\begin{lemma}\label{lemmaformatrices}		 
We have 
 		 \begin{equation} \mathbf{A}_T(1,1)=\mathbf{A}_{T-1}. \end{equation}
\end{lemma}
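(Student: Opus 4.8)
The plan is to prove this by simply unwinding the definitions of the two matrices. By the definition of the minor matrix, $\mathbf{A}_T(1,1)$ is the $(T-1)\times(T-1)$ matrix whose $(p,q)$ entry (for $1\le p,q\le T-1$) is $a_{p+1,q+1}$, where $[a_{ik}]=\mathbf{A}_T$; I want to show this equals $\mathbf{A}_{T-1}=[\tilde a_{pq}]$, whose entries are given by the very same formulas with $T$ replaced by $T-1$. The single observation making the two families of formulas coincide is the elementary shift identity
\[
\theta_{j+1}^{T}=1-(T-j-1)\beta=1-\bigl((T-1)-j\bigr)\beta=\theta_{j}^{T-1},
\]
valid for every $j$, which is immediate from $\theta_t^T=1-(T-t)\beta$.

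With this in hand I would compare the entries case by case, following exactly the partition used to define $\mathbf{A}_T$. For a diagonal entry with $1\le p\le T-2$, starting from $a_{p+1,p+1}=\tfrac12+\beta^2\sum_{j=p+1}^{T-1}\theta_{j+1}^{T}\alpha^{2j-2(p+1)}$ and substituting $j=m+1$, the sum becomes $\beta^2\sum_{m=p}^{(T-1)-1}\theta_{m+2}^{T}\alpha^{2m-2p}=\beta^2\sum_{m=p}^{(T-1)-1}\theta_{m+1}^{T-1}\alpha^{2m-2p}$ by the shift identity, which is exactly $\tilde a_{pp}-\tfrac12$. For the corner, $a_{TT}=\tfrac12=\tilde a_{(T-1)(T-1)}$. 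For a strictly sub-diagonal entry with $1\le q<p\le T-2$, from
\[
a_{p+1,q+1}=\frac{\beta\theta_{p+1}^{T}\alpha^{p-q-1}}{2}+\beta^2\sum_{j=p+1}^{T-1}\theta_{j+1}^{T}\alpha^{2j-(p+1)-(q+1)},
\]
the prefactor satisfies $\theta_{p+1}^{T}=\theta_{p}^{T-1}$, and the same substitution $j=m+1$ together with the shift identity turns the sum into $\beta^2\sum_{m=p}^{(T-1)-1}\theta_{m+1}^{T-1}\alpha^{2m-p-q}$, giving precisely $\tilde a_{pq}$. Finally, for the last row with $1\le q\le T-2$, $a_{T,q+1}=\tfrac{\beta}{2}\alpha^{T-(q+1)-1}=\tfrac{\beta}{2}\alpha^{(T-1)-q-1}=\tilde a_{(T-1),q}$. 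Since both $\mathbf{A}_T$ and $\mathbf{A}_{T-1}$ are symmetric, the entries above the diagonal agree automatically, which completes the identification.

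I expect no genuine analytic difficulty here: the proof is mechanical once the identity $\theta_{j+1}^{T}=\theta_{j}^{T-1}$ is noticed. The only point requiring care — and the nearest thing to an obstacle — is the bookkeeping of summation ranges: one must check that the inner range $\sum_{j=i}^{T-1}$ appearing in the formula for $\mathbf{A}_T$ becomes, after the shift $i=p+1$, $j=m+1$, exactly $\sum_{m=p}^{(T-1)-1}$, which is the range occurring in the defining formula of $\mathbf{A}_{T-1}$. Keeping this re-indexing consistent across the four blocks of entries (the diagonal, the $(T,T)$ corner, the strict sub-diagonal block, and the last row/column) is all that the proof really involves.
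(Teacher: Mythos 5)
Your proof is correct and follows essentially the same route as the paper: an entry-by-entry comparison of $\mathbf{A}_T(1,1)$ with $\mathbf{A}_{T-1}$ across the same four blocks (diagonal, corner, sub-diagonal, last row), driven by the shift identity $\theta_{j+1}^{T}=\theta_{j}^{T-1}$ and the corresponding re-indexing of the inner sums. The bookkeeping of the summation range $\sum_{m=p}^{(T-1)-1}$ checks out, so nothing is missing.
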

 		 
\begin{proof} We denote the elements of $\mathbf{A}_T(1,1)$ and $\mathbf{A}_{T-1}$ by $u_{i,k}$ and $v_{i,k}$, respectively.
 		 \begin{eqnarray*}
 		 u_{i,i}&=&a_{i+1,i+1}=\frac{1}{2}\beta^2\sum_{j=i+1}^{T-1}\theta_{j+1}^T\alpha^{2j-2(i+1)}
 		 	= \frac{1}{2}\beta^2\sum_{j=i}^{T-2}\theta_{j+2}^T\alpha^{2j-2i}\\
 		 &=&\frac{1}{2}\beta^2\sum_{j=i}^{T-2}\theta_{j+1}^{T-1}\alpha^{2j-2i}=v_{i,i} \qquad 1\leq i\leq T-2,\\
 		 u_{i,k}&=&a_{i+1,k+1}=\frac{\beta\theta_{i+1}^T\alpha^{i-k-1}}{2}+\beta^2\sum_{j=i+1}^{T-1}\theta_{j+1}^T\alpha^{2j-i-k-2}\\
 		 	&=&\frac{\beta\theta_{i}^{T-1}\alpha^{i-k-1}}{2}+\beta^2\sum_{j=i}^{T-1}\theta_{j+2}^T\alpha^{2j-i-k}\\
 		 	&=&\frac{\beta\theta_{i}^{T-1}\alpha^{i-k-1}}{2}+\beta^2\sum_{j=i}^{T-1}\theta_{j+1}^{T-1}\alpha^{2j-i-k}=v_{i,k} \qquad 1\leq k<i\leq T-2, \\
 		u_{T-1,k}&=&a_{T,k+1}=\frac{\beta\alpha^{T-(k+1)-1}}{2}=\frac{\beta\alpha^{(T-1)-k-1}}{2}=v_{T-1,k} \qquad 1\leq k\leq T-2\\
 		u_{T-1,T-1}&=&a_{TT}=\frac{1}{2}=v_{T-1,T-1}
 	\end{eqnarray*}
 	\end{proof}

\begin{lemma}\label{determinant1statement}
 		For the determinant of $\mathbf{A}_T$, for all $T\geq 2$, we have
 		\begin{equation}\label{determinant1}
 		\det \mathbf{A}_T=\frac{1+\beta^2(T-1)}{2}\det \mathbf{A}_{T-1}.
 		\end{equation}
\end{lemma}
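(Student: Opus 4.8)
The plan is to reduce $\det\mathbf{A}_T$ to $\det\mathbf{A}_T(1,1)$ by a single determinant-preserving row operation and then quote Lemma \ref{lemmaformatrices}. Concretely, I would replace the first row $R_1$ of $\mathbf{A}_T$ by $R_1-\beta\sum_{i=2}^T R_i$; this does not change the determinant. By Lemma \ref{sumofrowsstatement}, the resulting first row has all of its entries in columns $2,\dots,T$ equal to zero, so cofactor expansion along the first row gives
\begin{equation*}
\det\mathbf{A}_T=\Bigl(a_{11}-\beta\sum_{i=2}^T a_{i1}\Bigr)\det\mathbf{A}_T(1,1)=\Bigl(a_{11}-\beta\sum_{i=2}^T a_{i1}\Bigr)\det\mathbf{A}_{T-1},
\end{equation*}
the last equality by Lemma \ref{lemmaformatrices}. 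Hence \eqref{determinant1} is equivalent to the scalar identity $a_{11}-\beta\sum_{i=2}^T a_{i1}=\tfrac12\bigl(1+\beta^2(T-1)\bigr)$.

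To prove this identity I would substitute the defining formulas for $a_{11}$, for $a_{i1}$ with $2\le i\le T-1$, and for $a_{T1}$, i.e. the entries of $\mathbf{A}_T$ with second index equal to $1$. The sum $\beta\sum_{i=2}^T a_{i1}$ then splits into a contribution from the $\tfrac{\beta\theta_i^T\alpha^{i-2}}{2}$ pieces (together with $\tfrac{\beta\alpha^{T-2}}{2}$ from $a_{T1}$) and a contribution from the double sums $\beta^2\sum_{j=i}^{T-1}\theta_{j+1}^T\alpha^{2j-i-1}$. After exchanging the order of summation, the one-index sums of the form $\sum_i\theta_i^T\alpha^{i}$ (up to fixed powers of $\alpha$) collapse by Lemma \ref{simplelemma} and the inner $\alpha$-sums telescope, exactly as in the proof of Lemma \ref{sumofrowsstatement}; the $\beta^2$-part of $a_{11}$ cancels against the corresponding telescoped terms, the constant $\tfrac12$ from $a_{11}$ survives, and the remaining terms add up to $\tfrac12\beta^2(T-1)$. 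The degenerate case $\beta=0$ (i.e. $\alpha=1$) is immediate: then $a_{11}=\tfrac12$ and $a_{i1}=0$ for all $i\ge 2$, so both sides equal $\tfrac12$.

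The only genuine work is the bookkeeping in this scalar identity: one must carefully separate the generic rows $2\le i\le T-1$ from the last row $i=T$ (whose entry has a different shape), keep track of the shifts in the exponents of $\alpha$, and invoke Lemma \ref{simplelemma} with the correct summation bounds — the same technical care as in Lemma \ref{sumofrowsstatement}, but no new idea beyond that lemma, Lemma \ref{lemmaformatrices}, and invariance of the determinant under row operations. Granting the identity, the recursion \eqref{determinant1} follows at once.
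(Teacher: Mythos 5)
Your proposal is correct and follows exactly the paper's route: the determinant-preserving row operation $R_1\mapsto R_1-\beta\sum_{i=2}^T R_i$, Lemma \ref{sumofrowsstatement} to annihilate all first-row entries except the $(1,1)$ one, Lemma \ref{lemmaformatrices} to identify the minor $\mathbf{A}_T(1,1)$ with $\mathbf{A}_{T-1}$, and the scalar identity $a_{11}-\beta\sum_{i=2}^T a_{i1}=\tfrac{1}{2}\left(1+\beta^2(T-1)\right)$ established via Lemma \ref{simplelemma}. The only difference is that you sketch rather than fully execute the final bookkeeping, but your outline (separating the $i=T$ row, interchanging summation, telescoping, and invoking Lemma \ref{simplelemma}) matches the paper's computation step for step and the surviving terms are as you describe.
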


\begin{proof} We construct a matrix $\mathbf{B}_T$ in such a way that we subtract the rows of $\mathbf{A}_T$ 
multiplied by $\beta$ from the first row. Then, according to Lemma \ref{sumofrowsstatement}, 
in the first row of $\mathbf{B}_T$ all elements expect the first one ($b_{1,1}$) are zero. 
Hence, using Lemma \ref{lemmaformatrices}

\[
\det \mathbf{B}_T=b_{1,1}\det \mathbf{B}_T(1,1)
=\left(a_{1,1}-\beta\sum_{i=2}^Ta_{i,1}\right)\det \mathbf{A}_T(1,1)
=\left(a_{1,1}-\beta\sum_{i=2}^Ta_{i,1}\right)\det \mathbf{A}_{T-1}.
\]

We need to check that 
\begin{equation}\label{firstelementofB}
a_{1,1}-\beta\sum_{i=2}^Ta_{i,1}=\frac{1+\beta^2(T-1)}{2}.
\end{equation}
Indeed,
 \begin{eqnarray*} \beta\sum_{i=2}^Ta_{i,1}&=&\beta\sum_{i=2}^{T-1}\left(\frac{\beta\theta_i^T\alpha^{i-2}}{2}
 +\beta^2\sum_{j=i}^{T-1}\theta_{j+1}^T\alpha^{2j-i-1} \right)+\frac{\beta^2\alpha^{T-2}}{2} \\
 &=&\frac{\beta^2\alpha^{-2}}{2}\sum_{i=2}^{T-1}\theta_i^T\alpha^i
 +\beta^3\sum_{j=2}^{T-1}\sum_{i=2}^j\theta_{j+1}^T\alpha^{2j-i-1}+ \frac{\beta^2\alpha^{T-2}}{2} \\
 &=&\frac{\beta^2\alpha^{-2}}{2}\left((T-1)\alpha^2-\alpha^T\right)+\beta^2\sum_{j=2}^{T-1}
 \theta_{j+1}^T\alpha^{2j-1} (\alpha-1)\sum_{i=2}^j\alpha^{-i}+ \frac{\beta^2\alpha^{T-2}}{2} \textrm{ by Lemma \ref{simplelemma}},\\
 &=&\frac{\beta^2(T-1)}{2}+\beta^2\sum_{j=2}^{T-1}\theta_{j+1}^T\alpha^{2j-1}\left(\sum_{i=2}^j\alpha^{-i+1}
 -\sum_{i=2}^j\alpha^{-i}\right)\\
 &=&\frac{\beta^2(T-1)}{2}+\beta^2\sum_{j=2}^{T-1}\theta_{j+1}^T\alpha^{2j-1}\left(\alpha^{-1}-\alpha^{-j}\right)\\
 &=&\frac{\beta^2(T-1)}{2}+\beta^2\sum_{j=2}^{T-1}\theta_{j+1}^T\alpha^{2j-2}-\beta^2\sum_{j=2}^{T-1}\theta_{j+1}^T\alpha^{j-1} \\
 &=&\frac{\beta^2(T-1)}{2}+\beta^2\sum_{j=2}^{T-1}\theta_{j+1}^T\alpha^{2j-2}-\beta^2\sum_{j=3}^{T}\theta_j^T\alpha^{j-2} \\
 &=&\frac{\beta^2(T-1)}{2}+\beta^2\sum_{j=2}^{T-1}\theta_{j+1}^T\alpha^{2j-2}-\beta^2(T-2)\alpha 
 \textrm{ by Lemma \ref{simplelemma}}.
 \end{eqnarray*}
 We substitute this into \eqref{firstelementofB}:
 
 \begin{eqnarray*}
 a_{1,1}-\beta\sum_{i=2}^Ta_{i,1}&=&\frac{1}{2}+\beta^2\sum_{j=1}^{T-1}\theta_{j+1}^T\alpha^{2j-2}- \frac{\beta^2(T-1)}{2}-\beta^2\sum_{j=2}^{T-1}\theta_{j+1}^T\alpha^{2j-2}+\beta^2(T-2)\alpha \\
 &=&\frac{1}{2}+\beta^2\theta_{2}^T-\frac{\beta^2(T-1)}{2}+\beta^2(T-2)\alpha \\
 &=&\frac{1}{2}+\beta^2(T-1-(T-2)\alpha)-\frac{\beta^2(T-1)}{2}+\beta^2(T-2)\alpha=\frac{1+\beta^2(T-1)}{2}.
 \end{eqnarray*}
 \end{proof}

 \begin{lemma}\label{determinant2statement}
 $\mathbf{A}_T$ is positive definite and its determinant is
 \begin{equation}\label{determinant2}
 \det \mathbf{A}_T=\frac{1}{2^T}\prod_{i=0}^{T-1}\left(1+\beta^2i\right)\qquad \textrm{for all }T\geq 1.
 \end{equation}
 \end{lemma}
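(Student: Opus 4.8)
The plan is to prove both statements at once by induction on $T$, letting Lemma \ref{determinant1statement} supply the determinant recursion and Lemma \ref{lemmaformatrices} supply the submatrix structure needed to carry positive definiteness forward.

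First I would settle the base case $T=1$: here $\mathbf{A}_1=[a_{11}]$, and since the sum in the definition of $a_{ii}$ is empty when $i=T$ we have $a_{11}=\tfrac12$, so $\det\mathbf{A}_1=\tfrac12=\tfrac{1}{2^{1}}\prod_{i=0}^{0}(1+\beta^{2}i)$ and $\mathbf{A}_1$ is trivially positive definite. (This degenerate case is the one place I would be careful about the empty-sum conventions.)

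For the inductive step, assume $\mathbf{A}_{T-1}$ is positive definite with $\det\mathbf{A}_{T-1}=\tfrac{1}{2^{T-1}}\prod_{i=0}^{T-2}(1+\beta^{2}i)$. Lemma \ref{determinant1statement} then immediately yields $\det\mathbf{A}_T=\tfrac{1+\beta^{2}(T-1)}{2}\det\mathbf{A}_{T-1}=\tfrac{1}{2^{T}}\prod_{i=0}^{T-1}(1+\beta^{2}i)$, which is the claimed formula and is strictly positive since each factor satisfies $1+\beta^{2}i\ge 1$. To upgrade this to positive definiteness I would split off the first coordinate, writing $\mathbf{A}_T$ in block form with leading entry $a_{11}$ and $(2,2)$-block $\mathbf{A}_T(1,1)$; by Lemma \ref{lemmaformatrices} that block equals $\mathbf{A}_{T-1}$, which is positive definite, hence invertible, by the induction hypothesis. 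The Schur-complement identity writes $\det\mathbf{A}_T$ as the product of $\det\mathbf{A}_{T-1}$ with the Schur complement $s:=a_{11}-\mathbf{w}^{\mathbf{T}}\mathbf{A}_{T-1}^{-1}\mathbf{w}$, where $\mathbf{w}=(a_{21},\dots,a_{T1})^{\mathbf{T}}$; since $\det\mathbf{A}_T>0$ and $\det\mathbf{A}_{T-1}>0$, we get $s>0$, and then the completion-of-squares decomposition $\mathbf{x}^{\mathbf{T}}\mathbf{A}_T\mathbf{x}=s\,x_1^{2}+(\mathbf{y}+x_1\mathbf{A}_{T-1}^{-1}\mathbf{w})^{\mathbf{T}}\mathbf{A}_{T-1}(\mathbf{y}+x_1\mathbf{A}_{T-1}^{-1}\mathbf{w})$ for $\mathbf{x}=(x_1,\mathbf{y}^{\mathbf{T}})^{\mathbf{T}}$ shows the form is nonnegative and vanishes only at $\mathbf{x}=\mathbf{0}$. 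This closes the induction.

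The main point to get right is this last deduction of definiteness from positivity of the determinant; all the combinatorial effort is already in Lemmas \ref{determinant1statement} and \ref{lemmaformatrices}. An equivalent, perhaps slicker, alternative would be to invoke Cauchy's interlacing theorem for $\mathbf{A}_T(1,1)=\mathbf{A}_{T-1}$ inside $\mathbf{A}_T$: its positive definiteness forces all but possibly the smallest eigenvalue of $\mathbf{A}_T$ to be positive, and $\det\mathbf{A}_T>0$ then excludes a nonpositive smallest eigenvalue as well. Beyond that, the only subtlety I anticipate is the bookkeeping of the $T=1$ base case and keeping the empty-sum conventions consistent with the definitions of $a_{ii}$ and $a_{ik}$.
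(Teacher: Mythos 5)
Your proposal is correct, and the determinant half is exactly the paper's argument: base case $\det\mathbf{A}_1=1/2$, then induction via Lemma \ref{determinant1statement}. Where you diverge is in how positive definiteness is extracted. The paper notes that, by Lemma \ref{lemmaformatrices}, stripping the first row and column repeatedly produces $\mathbf{A}_{T-1},\mathbf{A}_{T-2},\dots$, so all trailing principal minors $\det[a_{ij}]_{i,j=n,\dots,T}$ are positive, and then invokes (implicitly) Sylvester's criterion in its trailing-minor form. You instead carry positive definiteness through the induction itself: the $(2,2)$-block $\mathbf{A}_T(1,1)=\mathbf{A}_{T-1}$ is positive definite by hypothesis, the Schur complement $s=a_{11}-\mathbf{w}^{\mathbf{T}}\mathbf{A}_{T-1}^{-1}\mathbf{w}$ is positive because $\det\mathbf{A}_T=s\det\mathbf{A}_{T-1}>0$, and the completion-of-squares identity then gives definiteness directly. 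Both routes are valid and rest on the same two lemmas; yours is more self-contained (it proves rather than cites the relevant linear-algebra criterion, and the Schur complement $s$ is visibly the factor $\frac{1+\beta^2(T-1)}{2}$ from Lemma \ref{determinant1statement}), while the paper's is shorter. Your base case is also slightly cleaner: you read $a_{11}=\tfrac12$ off the definition of $\mathbf{A}_1$ (the entry $a_{TT}$ with $T=1$), whereas the paper refers back to the quadratic coefficient appearing in the $T=1$ optimization. The Cauchy-interlacing alternative you mention would work equally well.
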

 
\begin{proof} For $T=1$, \eqref{determinant2} gives $1/2$. During the computation of $\hat{\phi}_1^1$ we 
saw that the coefficient of the quadratic term was indeed $1/2$. Let's assume that \eqref{determinant2} holds for $T-1$, namely
 \begin{equation}\label{assumption}
 \det \mathbf{A}_{T-1}=\frac{1}{2^{T-1}}\prod_{i=0}^{T-2}\left(1+\beta^2i\right).
 \end{equation}
 Then
 \begin{eqnarray*}\det \mathbf{A}_T&=&\frac{1+\beta^2(T-1)}{2}\det \mathbf{A}_{T-1},
 \quad\textrm{by Lemma \ref{determinant1statement},}\\
 		&=&\frac{1+\beta^2(T-1)}{2}\frac{1}{2^{T-1}}\prod_{i=0}^{T-2}\left(1+\beta^2i\right), 
 		\quad\textrm{by \eqref{assumption},}\\ 
 		&=&\frac{1}{2^T}\prod_{i=0}^{T-1}\left(1+\beta^2i\right).
 \end{eqnarray*}
 
 	Since $\det \mathbf{A}_{T}>0$ for all $T\geq 1$, Lemma \ref{lemmaformatrices} applies and 
 	the determinants of the matrices $[a_{ij}]_{i=n,\ldots,T;j=n,\ldots,T}$ are positive for all
 	$1\leq n\leq T$, 
 	therefore $\mathbf{A}_{T}$ is positive definite for all $T\geq 1$.
 \end{proof}
 	
Obviously, we can express $\det \mathbf{A}_T$ with the well-known $\Gamma$ function.
\begin{lemma}\label{determinantwithgamma}
We have $\det\mathbf{A}_T=\gamma_\beta(T)/2^T$, where $\gamma_\beta$ is the function defined in \ref{gamma}. 
\hfill$\Box$\end{lemma}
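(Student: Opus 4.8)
The plan is to read off the result directly from Lemma \ref{determinant2statement}, which already gives the closed form $\det\mathbf{A}_T = 2^{-T}\prod_{i=0}^{T-1}\left(1+\beta^2 i\right)$. Thus it suffices to verify that the product $\prod_{i=0}^{T-1}\left(1+\beta^2 i\right)$ coincides with $\gamma_\beta(T)$ as defined in \eqref{gamma}, and no further information about the matrix $\mathbf{A}_T$ is needed.

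First I would handle the degenerate case $\beta=0$ separately: every factor $1+\beta^2 i$ equals $1$, so the product is $1$, which matches the second branch of \eqref{gamma}. For $\beta\neq 0$, I would pull a factor of $\beta^2$ out of each of the $T$ factors, writing $\prod_{i=0}^{T-1}\left(1+\beta^2 i\right) = \beta^{2T}\prod_{i=0}^{T-1}\left(i+\tfrac{1}{\beta^2}\right)$, and then recognise the remaining product as a rising factorial (Pochhammer symbol) based at $1/\beta^2$. Invoking the functional equation of the gamma function in the form $\Gamma(x+T) = \Gamma(x)\prod_{i=0}^{T-1}(x+i)$, applied with $x=1/\beta^2$, the product equals $\Gamma\!\left(\tfrac{1}{\beta^2}+T\right)\big/\Gamma\!\left(\tfrac{1}{\beta^2}\right)$, so that $\det\mathbf{A}_T = 2^{-T}\beta^{2T}\Gamma\!\left(\tfrac{1}{\beta^2}+T\right)\big/\Gamma\!\left(\tfrac{1}{\beta^2}\right) = \gamma_\beta(T)/2^T$, as claimed.

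There is essentially no real obstacle here; the statement is a bookkeeping identity. The only two points worth a word of care are: checking that the argument $x=1/\beta^2$ is strictly positive, so that $\Gamma(x)$ is finite and nonzero and the recursion $\Gamma(x+1)=x\Gamma(x)$ may be iterated $T$ times; and checking that the index range $i=0,\dots,T-1$ contains exactly $T$ terms, so that the extracted power of $\beta^2$ is precisely $\beta^{2T}$ and matches the numerator of the first branch of \eqref{gamma}. With these observations the proof is a one-line computation.
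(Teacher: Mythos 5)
Your proof is correct and follows the only natural route: the paper itself offers no written proof (it labels the identity ``obvious''), and your argument simply makes explicit the intended reduction from Lemma \ref{determinant2statement} via $\prod_{i=0}^{T-1}(1+\beta^2 i)=\beta^{2T}\,\Gamma\bigl(\tfrac{1}{\beta^2}+T\bigr)/\Gamma\bigl(\tfrac{1}{\beta^2}\bigr)$ together with the separate check at $\beta=0$. Nothing is missing.
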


%\begin{proof} The case $\beta=0$ is trivial. Let's consider the case $\beta\neq 0$. We know that

%\begin{equation}
%\Gamma(x+n)=\prod_{i=0}^{n-1}(x+i)\Gamma(x), \quad\textrm{for all } x\geq 0\textrm{ and } n\in\mathbb{N}.
%\end{equation}
%Using this, we get
%\begin{equation}\label{otherformofgamma} 
%\gamma_\beta(T)=\frac{\beta^{2T}}{\Gamma\left(\frac{1}{\beta^2}\right)}\Gamma\left(\frac{1}{\beta^2}+T\right)
%=\beta^{2T}\prod_{i=0}^{T-1}\left(\frac{1}{\beta^2}+i\right)=\prod_{i=0}^{T-1}\left(1+\beta^2i\right).
%\end{equation}
%\end{proof}

%\begin{remark}
%The continuity of $\gamma_\beta$ in $\beta$ is trivial from \eqref{otherformofgamma}.
%\end{remark}
 
 Later on we will need the value of $p:=\left(\mathbf{A}_{T}^{-1}\right)_{1,1}$. 
 Now we compute it using Lemmas \ref{lemmaformatrices} and \ref{determinant1statement}:
 
 \begin{equation}\label{inverse}
 p=\frac{\det \mathbf{A}_T(1,1)}{\det \mathbf{A}_T}=\frac{\det \mathbf{A}_{T-1}}{\det \mathbf{A}_T}=\frac{2}{1+\beta^2(T-1)}.
 \end{equation}

%\subsection{Computing the minimum point of the conditional expectation value}

We need to  compute the minimiser of \eqref{matrixformofexp}. Note that in \eqref{matrixformofexp} only the exponent depends on $\phi$, so we can focus on this. Let 
\begin{equation}\label{f}
f(\phi, z):=\frac{\mathbf{b^T}(\phi,z)\mathbf{A}_T^{-1}\mathbf{b}(\phi,z)}{4}-c(\phi,z).
\end{equation}

Then, we need to solve
\begin{equation}\label{diffoff}
\partial_{\phi}f(\phi,z)=\frac{\partial_{\phi}\mathbf{b^T}(\phi,z)\mathbf{A}_T^{-1}\mathbf{b}(\phi,z)}{2}-\partial_{\phi}c(\phi,z)=0.
\end{equation}

From the definition a $\mathbf{b}(\phi,z)$ and $c(\phi,z)$
\begin{eqnarray}
\partial_{\phi}\mathbf{b^T}(\phi,z)&=&\left(\sigma,0,\dots,0\right), \label{diffofb}\\
\partial_{\phi}c(\phi,z)&=&\beta z. \label{diffofc}
\end{eqnarray}
 
 Note that we can write $\mathbf{b}(\phi,z)$ as
 
 \begin{equation}\label{b}
 \mathbf{b}(\phi,z)=\left(\sigma\phi-\frac{\alpha z}{\sigma}\right)\mathbf{e_1}+\frac{2\alpha z}{\sigma}\mathbf{A}_T(:,1),
 \end{equation}
 
 where $\mathbf{e_1}=\left(1,0\dots,0\right)^{\mathbf{T}}$, 
 and $\mathbf{A}_T(:,1)$ is the first column of $\mathbf{A}_T$. 
 Let's substitute \eqref{diffofb}, \eqref{diffofc} and \eqref{b} into \eqref{diffoff}: 		

\begin{eqnarray*}
	(\sigma, 0,\dots,0)\mathbf{A}_T^{-1}\left[ \left(\sigma\phi-\frac{\alpha z}{\sigma}\right)\mathbf{e_1}+
		\frac{2\alpha z}{\sigma}\mathbf{A}_T(:,1)\right]-2\beta z&=&0 \\
	\sigma\mathbf{A}_T^{-1}(1,:)\left[\left(\sigma\phi-\frac{\alpha z}{\sigma}\right)\mathbf{e_1}+
		\frac{2\alpha z}{\sigma}\mathbf{A}_T(:,1)\right]-2\beta s_0&=&0 \\
	\left(\sigma^2\phi-2\alpha z\right)p+2\alpha z-2\beta z&=&0 \\
	\sigma^2\phi-\alpha z&=&-\frac{2z}{p} \\
	\sigma^2\phi-\alpha z&=&-\left(1+\beta^2(T-1)\right)z\quad\textrm{by \eqref{inverse}, hence} \\
	\phi&=&\frac{\beta z}{\sigma^2}\left(1-(T-1)\beta\right) \\
	\phi&=&\frac{\beta z}{\sigma^2}\theta_1^T.
\end{eqnarray*}

	We can see from the above calculation that this $\phi$ is a global minimiser of $f$ for a given $z$.
	Hence the minimiser $\phi$ for \eqref{phi1def} is
	\[\phi=\hat{\phi}_1^T(z)=\frac{\beta z}{\sigma^2}\theta_1^T,\]
	and we have proved the first part \autoref{maintheorem} in the case of using past information. 
	As we have found explicit optimal strategies for the expected utility problem, we can now turn to \eqref{condexp} 
	and \eqref{exp}.

%\subsection{Computing the maximal expected utility}

First we compute the maximal conditional expected utility which is
\begin{equation}\label{almostdone}
\mathbb{E}\left[U\left(L_T^{{\phi}^*}\right)|X_0=z\right]=-\frac{1}{\sqrt{\gamma_\beta(T)}}e^{f\left(\hat{\phi}_1^T(z),z\right)}
\quad\textrm{by \eqref{matrixformofexp},  Lemma \ref{determinantwithgamma} and \eqref{f}}.
\end{equation}

Let
$\hat{f}(z):=f\left(\hat{\phi}_1^T(z),z\right)$,
$\mathbf{\hat{b}}(z):=\mathbf{b}\left(\hat{\phi}_1^T(z),z\right)$ and
$\hat{c}(z):=c\left(\hat{\phi}_1^T(z),z\right)$.

First we compute $\mathbf{\hat{b}^T}(z)\mathbf{A}_T^{-1}\mathbf{\hat{b}}(z)$:

\begin{eqnarray*}
	\mathbf{\hat{b}^T}(z)\mathbf{A}_T^{-1}\mathbf{\hat{b}}(z)&=& \left[\frac{2\alpha z}{\sigma}\mathbf{A}_T(:,1)
	-\frac{z}{\sigma}\left(1+(T-1)\beta^2\right)\mathbf{e_1}\right]^{\mathbf{T}}\mathbf{A}_T^{-1}\\
	&\,&\cdot\left[\frac{2\alpha z}{\sigma}\mathbf{A}_T(:,1)-\frac{z}{\sigma}\left(1+(T-1)\beta^2\right)\mathbf{e_1}\right]
	 \quad\textrm{by \eqref{b},}\\
&=&\left[\frac{2\alpha z}{\sigma}\left(1,0,\dots,0\right)-\frac{z}{\sigma}\left(1+(T-1)\beta^2\right)\mathbf{A}_T^{-1}(1,:)\right] 
\cdot\left[\frac{2\alpha z}{\sigma}\mathbf{A}_T(:,1)-\frac{z}{\sigma}\left(1+(T-1)\beta^2\right)\mathbf{e_1}\right],\\
&=&\frac{z^2}{\sigma^2}\left(\left(1+\beta^2(T-1)\right)^2p-4\alpha\left(1+\beta^2(T-1)\right)+4\alpha^2a_{1,1}\right)\\
&=&\frac{z^2}{\sigma^2}\left(2\left(1+\beta^2(T-1)\right)-4\alpha\left(1+\beta^2(T-1)\right)+4\alpha^2a_{1,1}\right)
\quad\textrm{by \eqref{inverse}.}
\end{eqnarray*}

 For $\hat{f}(z)$
 \begin{eqnarray*}
 \hat{f}(z)&=&\frac{\mathbf{\hat{b}^T}(z)\mathbf{A}_T^{-1}\mathbf{\hat{b}}(z)}{4}-\hat{c}(z)\\
  &=&\frac{z^2}{\sigma^2}\left(\frac{\left(1+\beta^2(T-1)\right)}{2}-\alpha\left(1+\beta^2(T-1)\right)
  +\alpha^2a_{1,1}- \beta(1-(T-1)\beta)-\beta^2\sum_{j=2}^T\theta_j^T\alpha^{2j-2}\right)\\
  &=&\frac{z^2}{\sigma^2}\left(\frac{\left(1+\beta^2(T-1)\right)}{2}-\alpha\left(1+\beta^2(T-1)\right)
  +\frac{\alpha^2}{2}+\beta^2\sum_{j=1}^{T-1}\theta_{j+1}^T\alpha^{2j}
  -\beta(1-(T-1)\beta)-\beta^2\sum_{j=2}^T\theta_j^T\alpha^{2j-2}\right)\\
  &=&-\frac{\beta^2 z^2}{2\sigma^2}T.
 \end{eqnarray*}
 
 Hence the maximal achievable conditional expected utility is
 \begin{equation}
 \mathbb{E}\left[U\left(L_T^{{\phi}^*}\right)|X_0=z\right]=-\frac{1}{\sqrt{\gamma_\beta(T)}}e^{-\frac{\beta^2z^2}{2\sigma^2}T},
 \end{equation}
 and we have proved \eqref{condexp}. Now we prove \eqref{exp}. For stable processes, in case of $\mathrm{var}(X_t)=1$ 
 for all $t\geq 0$
 
 \begin{equation} 1=\mathrm{var}(X_t)=\mathrm{var}\left(\sigma\sum_{j=0}^{+\infty}\alpha^j\varepsilon_{t-j}\right)=\sigma^2\sum_{j=0}^{+\infty}\alpha^{2j}=\frac{\sigma^2}{1-\alpha^2},\nonumber
 \end{equation}
 therefore $\sigma^2=1-\alpha^2$. As $X_0$ is $N(0,1)$, the maximal expected utility can be found using 
 \eqref{integral1}:
 \begin{eqnarray*}
 \mathbb{E}\left[U\left(L_T^{{\phi^*}}\right)\right]&=&
 \mathbb{E}\left[\mathbb{E}\left[U\left(L_T^{{\phi}^*}\right)|X_0 \right]\right] 
  =\mathbb{E}\left[-\frac{1}{\sqrt{\gamma_\beta(T)}}e^{-\frac{\beta^2X_0^2}{2\sigma^2}T}\right]\\
	&=&-\frac{1}{\sqrt{2\pi \gamma_\beta(T)}}\int_{\mathbb{R}}e^{-\frac{\beta^2x^2}{2\sigma^2}T-\frac{x^2}{2}}\mathrm{d}x 
	=-\frac{1}{\sqrt{\gamma_\beta(T)\left(\frac{\beta^2T}{1-\alpha^2}+1\right)}} \\
	&=&-\sqrt{\frac{\beta+2}{\left(2-(T-1)\beta\right)\gamma_\beta(T)}},
	\end{eqnarray*}
so we have proved \eqref{exp}.

%\subsection{In case of lack of memory}

Now we focus on the case where the strategies depend only on the initial value $X_0$
of the autoregressive process. In this case using the strategy $\eta=(\eta_1,\dots,\eta_T)$  we get
\begin{eqnarray*}
L_T^\eta&=&\sum_{j=1}^T\eta_j\left(X_j-X_{j-1}\right) \\
		&=&\sum_{j=1}^T\eta_j\left(\alpha^{j-1}\beta X_0+\sigma\varepsilon_j+\sigma\beta\sum_{k=1}^{j-1}\alpha^{j-1-k}\varepsilon_k\right) \\
		&=&\beta X_0\sum_{j=1}^T\eta_j\alpha_{j-1}+\sigma\sum_{j=1}^T\eta_j\varepsilon_j+\beta\sigma\sum_{j=1}^T\sum_{k=1}^{j-1}\eta_j \alpha^{j-k-1}\varepsilon_k \\
		&=&\beta X_0\sum_{j=1}^T\eta_j\alpha_{j-1}+\sigma\sum_{j=1}^T\eta_j\varepsilon_j +\beta\sigma\sum_{k=1}^T\varepsilon_k\sum_{j=k+1}^T\eta_j\alpha^{j-k-1}.
	\end{eqnarray*}
	
	Let $c: \mathbb{R}^T\rightarrow\mathbb{R}$, and $\mathbf{b}:\mathbb{R}^T\rightarrow\mathbb{R}^T$, where
	\[c(\eta)=\beta X_0\sum_{j=1}^T\eta_j\alpha^{j-1},\quad
	\textrm{and}\quad b_k(\eta)=\sigma\varepsilon_k+\beta\sigma\sum_{j=k+1}^T\eta_j\alpha^{j-k-1},\quad 1\leq k\leq T.\]
Using the notation $L_T^\eta=c(\eta)+\mathbf{b^T}(\eta)\mathbf{\varepsilon}$ we get from 
Lemma \ref{integral2lemma} that
\begin{equation}\label{almostdonenomem}
\mathbb{E}\left[U\left(L_T^\eta\right)|X_0\right]=-e^{g(\eta)}, \textrm{ where } 
g(\eta)=-c(\eta)+\frac{\mathbf{b^T}(\eta)\mathbf{b}(\eta)}{2}.
\end{equation} 

We need to solve the system of equations $\nabla g(\eta)=\mathbf{0}$. 
We denote these equations by $(E_k)$, where $1\leq k\leq T$:
\[0=\frac{\partial g}{\partial \eta_k}(\eta)=-X_0\beta\alpha^{k-1}+\mathbf{b^T}(\eta)\frac{\partial\mathbf{b}}{\partial\eta_k}(\eta).
\quad (E_k)\]
The partial derivatives of $\mathbf{b}$ are:
\[\left(\frac{\partial\mathbf{b}}{\partial\eta_k}(\eta)\right)_l=\left\{\begin{array}{cl} \sigma\beta\alpha^{k-l-1} & \textrm{if }l<k \\
																																										\sigma & \textrm{if } l=k \\
																																										0 & \textrm{if } l>k. \end{array} \right. \]
Hence we have
\begin{eqnarray*}
\mathbf{b^T}(\eta)\frac{\partial\mathbf{b}}{\partial\eta_k}(\eta)&=&\sigma\beta\sum_{l=1}^{k-1}\alpha^{k-l-1}\left(\sigma\eta_l +\sigma\beta\sum_{j=l+1}^T\eta_j\alpha^{j-l-1}\right)+\sigma^2\eta_k+\sigma^2\beta\sum_{j=k+1}^T\eta_j\alpha^{j-k-1} \\
	&=&\sigma^2\beta\sum_{l=1}^{k-1}\eta_l\alpha^{k-l-1}+\sigma^2\beta^2\sum_{l=1}^{k-1}\sum_{j=l+1}^T\eta_j\alpha^{j+k-2l-2} +\sigma^2\eta_k+\sigma^2\beta\sum_{j=k+1}^T\eta_j\alpha^{j-k-1}.
	\end{eqnarray*}
	
Therefore the equations $E_k$ take the form 
\[\beta\sum_{l=1}^{k-1}\eta_l\alpha^{k-l-1}+\beta^2\sum_{l=1}^{k-1}\sum_{j=l+1}^T\eta_j\alpha^{j+k-2l-2} 
+\eta_k+\beta\sum_{j=k+1}^T\eta_j\alpha^{j-k-1}=\frac{X_0}{\sigma^2}\beta\alpha^{k-1}. \]

Let $k\in\{1, 2, \dots ,T-1\}$. Then for $E_{k+1}$ we have
\[\beta\sum_{l=1}^{k}\eta_l\alpha^{k-l}+\beta^2\sum_{l=1}^{k}\sum_{j=l+1}^T\eta_j\alpha^{j+k-2l-1} +\eta_{k+1}+\beta\sum_{j=k+2}^T\eta_j\alpha^{j-k-2}=\frac{X_0}{\sigma^2}\beta\alpha^{k}. \]
We define equation $F_k$ for $1\leq k\leq T-1$ by substract equation $E_k$ multiplied by $\alpha$ from equation $E_{k+1}$, so we get
\begin{eqnarray*}
0&=&\beta\eta_k+\beta^2\sum_{j=k+1}^T\eta_j\alpha^{j-k-1}+\eta_{k+1}-\alpha\eta_k
+\beta\sum_{j=k+2}^T\eta_j\alpha^{j-k-2} -\beta\sum_{j=k+1}^T\eta_k\alpha^{j-k}, \\
0&=&\eta_{k+1}-\eta_k+\beta\left(\sum_{j=k+1}^T\eta_j\alpha^{j-k}-\sum_{j=k+1}^T\eta_j\alpha^{j-k-1}\right)
+\beta\sum_{j=k+2}^T\eta_j\alpha^{j-k-2} -\beta\sum_{j=k+1}^T\eta_k\alpha^{j-k}, \\
0&=&\eta_{k+1}-\eta_k-\beta\sum_{j=k+1}^T\eta_j\alpha^{j-k-1}+\beta\sum_{j=k+2}^T\eta_j\alpha^{j-k-2}, \\
0&=&\eta_{k+1}-\eta_k-\beta\eta_{k+1}+\beta\sum_{j=k+2}^T\eta_j\alpha^{j-k-2}(1-\alpha), \\
0&=&(1-\beta)\eta_{k+1}-\eta_k-\beta^2\sum_{j=k+2}^T\eta_j\alpha^{j-k-2}. \\
\end{eqnarray*}

\begin{lemma}\label{eqnstatement} 
For the solutions of the system $F_k$, $k=1,\ldots,T-1$,
\begin{equation}\label{eqn}
 \eta_k=\theta_k^T\eta_T=(1-(T-k)\beta)\eta_T 
\end{equation}
hold, for all $k=1,\ldots, T-1$.
\end{lemma}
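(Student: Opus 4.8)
The plan is to establish \eqref{eqn} by backward induction on $k$, running from $k=T-1$ down to $k=1$ and using the recursion $F_k$, which when solved for $\eta_k$ reads
\[
\eta_k=(1-\beta)\eta_{k+1}-\beta^2\sum_{j=k+2}^T\eta_j\alpha^{j-k-2}.
\]
For the base case $k=T-1$ the sum is empty (it runs over $j=T+1,\ldots,T$), so $F_{T-1}$ gives $\eta_{T-1}=(1-\beta)\eta_T$; since $\theta_{T-1}^T=1-\beta$, this is precisely \eqref{eqn} for $k=T-1$.

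For the inductive step, fix $k\le T-2$ and assume $\eta_j=\theta_j^T\eta_T$ already holds for every $j\in\{k+1,\ldots,T\}$ (the endpoint $j=T$ being trivial, as $\theta_T^T=1$). Substituting these relations into the displayed recursion yields
\[
\eta_k=\eta_T\left((1-\beta)\theta_{k+1}^T-\beta^2\sum_{j=k+2}^T\theta_j^T\alpha^{j-k-2}\right).
\]
The key simplification is that the $\alpha$-weighted sum collapses: applying Lemma \ref{simplelemma} with $m=k+2$, $n=T$ gives $\sum_{j=k+2}^T\theta_j^T\alpha^{j}=(T-k-1)\alpha^{k+2}$, the boundary term $(T-n)\alpha^{n+1}$ vanishing at $n=T$; dividing this polynomial identity through by $\alpha^{k+2}$ leaves $\sum_{j=k+2}^T\theta_j^T\alpha^{j-k-2}=T-k-1$.

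It then remains to check the elementary identity $(1-\beta)\theta_{k+1}^T-\beta^2(T-k-1)=\theta_k^T$, that is $(1-\beta)\big(1-(T-k-1)\beta\big)-\beta^2(T-k-1)=1-(T-k)\beta$: when the left-hand side is expanded the two $(T-k-1)\beta^2$ contributions cancel, leaving $1-(T-k-1)\beta-\beta=1-(T-k)\beta=\theta_k^T$. This closes the induction and proves the lemma. There is no real obstacle here; the only step needing a moment's attention is the telescoping of the geometric-in-$\alpha$ sum through Lemma \ref{simplelemma}, after which everything is routine algebra in $\beta$.
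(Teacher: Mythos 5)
Your proof is correct and follows essentially the same route as the paper: backward induction starting from $F_{T-1}$, with the sum $\sum_{j=k+2}^T\theta_j^T\alpha^{j-k-2}$ collapsing to $T-k-1$ via Lemma \ref{simplelemma}, and the remaining algebra in $\beta$ closing the induction. No issues.
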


\begin{proof} First we consider the equation $F_{t-1}$,
\begin{eqnarray*}
0&=&(1-\beta)\eta_T-\eta_{T-1} \\
\eta_{T-1}&=&\theta_{T-1}^T\eta_T
\end{eqnarray*}

Let's assume that \eqref{eqn} holds for $l=k+1, \dots ,T-1$. 
Considering the equation $F_k$, using Lemma \ref{simplelemma} we get
\begin{eqnarray*}
0&=&(1-\beta)\eta_{k+1}-\eta_k-\beta^2\sum_{j=k+2}^T\eta_j\alpha^{j-k-2}, \\
0&=&(1-\beta)(1-(T-k-1)\beta)\eta_T-\eta_k-\beta^2\sum_{j=k+2}^T\theta_j^T\eta_T\alpha^{j-k-2}, \\
0&=&(1-\beta)(1-(T-k-1)\beta)\eta_T-\eta_k-\beta^2(T-k-1)\eta_T, \\
\eta_k&=&(1-(T-k-1)\beta-\beta)\eta_T ,\\
\eta_k&=&\theta_k^T\eta_T.
\end{eqnarray*}
\end{proof}

Because of $\theta_T^T=1$, $\eta_T=\theta_T^T\eta_T$ also holds. To prove the first part of 
\autoref{maintheorem} for the case without using past information, we only need to show that $\eta_T=\beta\frac{X_0}{\sigma^2}$.

Substituting \eqref{eqn} into $E_T$,
\begin{equation}
\beta\sum_{l=1}^{T-1}\theta_l\eta_T\alpha^{T-l-1}+\beta^2\sum_{l=1}^{T-1}\sum_{j=l+1}^T\theta_j^T\eta_T\alpha^{j+T-2l-2} 
+\eta_T=\frac{X_0}{\sigma^2}\beta\alpha^{T-1} \\
\end{equation}
We compute the two sums separately,
\begin{eqnarray*}
\sum_{l=1}^{T-1}\theta_l^T\alpha^{T-l-1}&=&\sum_{l=1}^{T-1}\left(T+1-l-(T-l)\alpha\right)\alpha^{T-l-1} 
	=\sum_{l=1}^{T-1}(T+1-l)\alpha^{T-l-1}-\sum_{l=1}^{T-1}(T-l)\alpha^{T-l} \\
	&=&\sum_{l=2}^{T}(T+2-l)\alpha^{T-l}-\sum_{l=1}^{T-1}(T-l)\alpha{T-l}
	=\sum_{l=2}^T2\alpha^{T-l}-(T-1)\alpha^{T-1}; \\
\beta\sum_{l=1}^{T-1}\sum_{j=l+1}^T\theta_j^T\alpha^{j+T-2l-2}&=&\beta\sum_{l=1}^{T-1}\alpha^{T-2l-2}\sum_{j=l+1}^T\theta_j^T\alpha^j
	=\beta\sum_{l=1}^{T-1}(T-l)\alpha^{T-l-1} \\
	&=&\sum_{l=1}^{T-1}(T-l)\alpha^{T-l}-\sum_{l=1}^{T-1}(T-l)\alpha^{T-l-1}
	=\sum_{l=1}^{T-1}(T-l)\alpha^{T-l}-\sum_{l=2}^{T}(T-l+1)\alpha^{T-l}\\
	&=&-\sum_{l=2}^T\alpha^{T-l}+(T-1)\alpha^{T-1}.
\end{eqnarray*}

Hence, from $E_T$:
\begin{eqnarray*}
\left(1+\beta\sum_{l=2}^T\alpha^{T-l}\right)\eta_T&=&\alpha^{T-l}\beta\frac{X_0}{\sigma^2} \\
\left(1+\sum_{l=2}^T\alpha^{T-l+1}-\sum_{l=2}^T\alpha^{T-l}\right)\eta_T&=&\alpha^{T-l}\beta\frac{X_0}{\sigma^2} \\
\alpha^{T-1}\eta_T&=&\alpha^{T-l}\beta\frac{X_0}{\sigma^2} \\
\eta_T&=&\beta\frac{X_0}{\sigma^2}.
\end{eqnarray*}
So we have proved the firs part of \autoref{maintheorem}. Now we prove \eqref{condexpnomem}. First we compute 
the element of  $\hat{\mathbf{b}}:=\mathbf{b}(\hat{\eta})$, using Lemma \ref{simplelemma},
\begin{equation}
\hat{b}_k=\frac{\beta X_0}{\sigma}\theta_k+\frac{\beta^2 X_0}{\sigma}\sum_{j=k+1}^T\theta_j\alpha^{j-k-1}
	=\frac{\beta X_0}{\sigma}(1-(T-k)\beta)+\frac{\beta^2 X_0}{\sigma}(T-k)
	=\frac{\beta X_0}{\sigma}.
\end{equation}
Therefore 
$\mathbf{\hat{b}^T\hat{b}}=\frac{\beta^2 X_0^2}{\sigma^2}T$, and
$\hat{c}:=c(\hat{\eta})=\frac{\beta^2X_0^2}{\sigma^2}\sum_{j=1}^T\theta_j\alpha^{j-1}=\frac{\beta^2X_0^2}{\sigma^2}T$, by 
Lemma \ref{simplelemma}.
Hence, using \eqref{almostdonenomem}, we get
$\mathbb{E}\left[U\left(L_T^{\hat{\eta}}\right)|X_0\right]
=e^{-\frac{\beta^2X_0^2}{2\sigma^2}T}$, which proves \eqref{condexpnomem}. 
Based on the same calculation that we did to get \eqref{exp} from \eqref{condexp},
we get \eqref{expnomem} from \eqref{condexpnomem}.

%\subsection{The asymptotics and a lower bound of the expected utility}

The only statment of \autoref{maintheorem} left to prove is the forth part. For $\beta=0$, the statement is trivial. For the other case, using $y(T):=T-1+\frac{1}{\beta^2}$, we get
\begin{equation}
\frac{\gamma_\beta(T)}{h_\beta(T)}=\frac{\beta^{2T}\Gamma(y(T)+1)}
{\left(\beta^2\right)^{1-\frac{1}{\beta^2}}\sqrt{2\pi y(T)}\left(\frac{\beta^2y(T)}{e}\right)^{T-1+\frac{1}{\beta^2}}}
=\frac{\Gamma\left(y(T)+1\right)}{\sqrt{2\pi y(T)}\left(\frac{y(T)}{e}\right)^{y(T)}},
\end{equation}
and it is well-known  that this expression tends to $1$ if $T$ (and hence also $y(T)$) tend to infinity.


\begin{thebibliography}{9}

\bibitem{baillie} R.~T. Baillie. Long memory processes and fractional integration in econometrics.
 {\em J. Econometrics}, 73:5--59, 1996.

\bibitem{dm}
Dellacherie, C. and Meyer, P. A. {\em Probabilities and potential.}
Mathematical studies {29}, North-Holland, Amsterdam, 1978.

 
 \bibitem{dokuchaev} N. Dokuchaev. Mean-reverting market models: speculative opportunities and non-arbitrage. 
 \emph{Appl. Math. Finance}, 14:319--337, 2007.

 \bibitem{fs} H. F\"ollmer and W. Schachermayer. 
 Asymptotic arbitrage and large deviations. \emph{Math. Financ. Econ.}, Vol. 1:213--249, 2007.

 \bibitem{martin} M. L. D. Mbele-Bidima and M. R\'asonyi.
 Asymptotic exponential arbitrage and utility-based asymptotic arbitrage in Markovian models of financial 
markets. {\em To appear in Acta Applicandae Mathematicae}, 2014.\texttt{ arXiv:1406.5312}

\end{thebibliography}
\end{document}